\numberwithin{equation}{section}
\newtheorem{theorem}{Theorem}[section]
\newtheorem{definition}[theorem]{Definition}
\newtheorem{remark}[theorem]{Remark}
\newtheorem{proposition}[theorem]{Proposition}
\newcommand{\mau}{\mathbf{u}}
\newcommand{\mav}{\mathbf{v}}
\newcommand{\gf}{\operatorname{GF}}
\newcommand{\pf}{\operatorname{Pf}}
\newcommand{\sgn}{\text{sgn}}
\newcommand{\puv}{\mathscr{P}(\mathbf{u}; \mathbf{v})}
\newcommand{\Mp}{\mathscr{P}}
\newcommand{\pouv}{\mathscr{P}_0(\mathbf{u}; \mathbf{v})}
\begin{document}
	\title[Combinatoric explanations for Block Pfaffians]{Non-intersecting path explanation for block Pfaffians and applications into skew-orthogonal polynomials}
	
	\subjclass[2020]{15B52, 15A15, 33E20}
	\date{}
	
	\author{Zong-Jun Yao}
	\address{Department of Mathematics, Sichuan University, Chengdu, 610064, China}
	\email{}
	
	\author{Shi-Hao Li}
	\address{Department of Mathematics, Sichuan University, Chengdu, 610064, China}
	\email{shihao.li@scu.edu.cn}

	\begin{abstract}
		 In this paper, we mainly consider a combinatoric explanation for block Pfaffians in terms of non-intersecting paths, as a generalization of results obtained by Stembridge. As applications, we demonstrate how are generating functions of non-intersecting paths related to skew orthogonal polynomials and their deformations, including a new concept called multiple partial-skew orthogonal polynomials.	
	\end{abstract}
	
	\dedicatory{}
	
	\keywords{LGV lemma, Stembridge's theorem, Pfaffians, orthogonal polynomials, skew-orthogonal polynomials}

	\maketitle

	\section{Introduction}
	Enumerative combinatorics is always playing a fundamental role in mathematics and physics. Among these combinatorial objects, the theory of non-intersecting paths is an extremely useful  theory with applications in counting tilings, plane partitions, and tableaux. In particular, the famous Lindstr\"om-Gessel-Vionnet (LGV) lemma states that the generating function of non-intersecting paths between vertex sets $\mau\to\mav$, where $\mau$ and $\mav$ have the same number of vertices, could be written as a determinant \cite{lindstorm73,gessel85}. This result has many applications such as non-intersecting random walks \cite{johansson02,johansson13}, domino tiling models \cite{duits21}, dimer models \cite{ciucu11}, and so on. It turns out that the generating functions of non-intersecting paths are usually expressed in terms of structured determinants such as Hankel or Toeplitz determinants. Moreover, it has been shown in \cite{nakamura04,chen15} that the Hankel determinant of some special combinatorial numbers could be evaluated by using orthogonal polynomials, continued fraction and discrete integrable systems. Therefore, there is a unified relation between LGV lemma, integrable models, probability models, and orthogonal polynomials. 
	
	In \cite{stembridge90}, the author developed another algebraic tool---Pfaffian, to describe non-intersecting paths between a set of vertices to an interval. This combinatorial description for Pfaffian was also applied to count skew Young tableaux and plane partitions \cite{okada89,stembridge90}. A minor summation formula for Pfaffian \cite{ishiwaka05} was given by using similar combinatorial technique, called \emph{the path switching involution} in \cite{krattenthaler17}. The minor summation formula is as an analogy of Cauchy-Binet formula for determinants, and it was applied to evaluate a certain Catalan-Hankel type Pfaffian
	\begin{align}\label{catalan}
		\pf\left(
		(j-i)\mu_{i+j+k}
		\right)_{i,j=0}^{2n-1},
	\end{align}
	where $\{\mu_{i}\}_{i\in\mathbb{N}}$ are moments sequence related to little $q$-Jacobi polynomials \cite{ishiwaka13}.  In fact, Pfaffian expression \eqref{catalan} is an important quantity in random matrix theory, especially for the characterization of symplectic invariant ensemble \cite{mehta00}.
	By realizing that \eqref{catalan} is a normalization factor for certain skew-orthogonal polynomials, various Catalan-Hankel type Pfaffians were computed by using skew-orthogonal polynomials under Askey-Wilson scheme in \cite{shen21}.  
	
	Inspired by these results, we attempt to give a unified frame between non-intersecting paths, Pfaffian formulas, and skew-orthogonal polynomials. It was known from \cite{stembridge90} that the generating function of non-intersecting paths from $\mau\to I$ could be written as a Pfaffian, namely, 
		\begin{align}\label{evenorder}
		\gf[\mathscr{P}_0(\mathbf{u};I)]=\pf\left[
		Q_I(u^{(i)},u^{(j)})
		\right]_{i,j=1}^{2n},
	\end{align}
	where $\mathscr{P}_0(\mathbf{u};I)$ stands for all non-intersecting paths from an even-numbered vertex set $\mathbf{u}=(u^{(1)},\cdots,u^{(2n)})$ to an interval $I$ and $Q_I(u^{(i)},u^{(j)})$ are some computable weights. Different from the Pfaffian formula considered in \eqref{catalan}, we show that this generating function is related to another type of skew-orthogonal polynomials, which are related to orthogonal invariant ensemble in random matrix theory.
	Besides, if the vertex set $\mathbf{u}=(u^{(1)},\cdots,u^{(2n+1)})$, then its corresponding generating function could be expressed as an augmented Pfaffian
	\begin{align*}
		\gf[\mathscr{P}_0(\mathbf{u};I)]=\pf\left[
		\begin{array}{cc}
			Q_I(u^{(i)},u^{(j)}) &Q_I(u^{(i)})\\
			-Q_I(u^{(j)})&0
		\end{array}
		\right]_{1\leq i,j\leq 2n+1},
	\end{align*}
	where $Q_I(u^{(j)})$ is a weight function related to $u^{(j)}$ only.  
	Since skew-orthogonal polynomials are only related to the even-ordered Pfaffian formula \eqref{evenorder}, we demonstrate that this augmented Pfaffian should be related to partial-skew-orthogonal polynomials proposed in \cite{chang18}.
	
	In recent years, matrix-valued orthogonal polynomials play an important role in probability and combinatoric models such as hexagon tilings \cite{groot21} and periodic Aztec diamond models \cite{duits21}. Despite of block determinants, Pfaffians of skew symmetric block matrices were also proved to be useful in practice \cite{li23}. It was shown in \cite{stembridge90} that the generating function for $\mathbf{u}\to \mathbf{v}\oplus I$ could be expressed as a block Pfaffian, with one block being all zeros. Here $\mathbf{u}$ and $\mathbf{v}$ are two sets of vertices and $I$ is an interval of vertices. Our first result is to give a non-intersecting path interpretation for block Pfaffians
	\begin{align}\label{general}
		\pf\left[
		\begin{array}{cc}
			A& B\\
			-B^\top&C
		\end{array}
		\right].
	\end{align}
	We show that this Pfaffian could be expressed as the generating function of non-intersecting paths $\mathscr{P}_0(J\oplus\mathbf{u};\mathbf{v}\oplus I)$, where $\mathbf{u}$ and $\mathbf{v}$ are two sets of vertices and $I$ and $J$ are two intervals of vertices. Besides, we show that this generating function is related to a 2-component skew-orthogonal polynomials, which could be used to describe a non-intersecting random walk with two different sources. 
	Moreover, a concept of multiple partial-skew-orthogonal polynomials is proposed if the number of vertices in $\mathbf{u}$ and $\mav$ are odd.
	Furthermore, we extend formula \eqref{general} to a more general case, where we consider the paths from $\{\mau_1,\cdots,\mau_n,I_1,\cdots,I_m\}$ to $\{J_1,\cdots,J_n,\mav_1,\cdots,\mav_m\}$. We show that this generating function could also be written as a block Pfaffian. We remark that in the reference \cite{carrozza18}, a non-intersecting path explanation for Pfaffians \eqref{general} was given, but it was shown by using Grassmann algebra in a cyclic digraph. 
	
	This paper is organized as follows. In Section \ref{sec2}, we recall the LGV lemma and show some applications. We connect the LGV lemma with several different orthogonal polynomials (including orthogonal polynomials on the real line and on the unit circle) and  random walks (including discrete-time random walks and continuous-time random walks). In Section \ref{sec3}, we recall Stembridge's results on the non-intersecting path explanation for Pfaffians. Their connections with skew-orthogonal polynomials and partial-skew-orthogonal polynomials are given. We show that this generating function is related to some random walk models as well. Thus we give a unified frame between non-intersecting paths, Pfaffian formulas and skew-orthogonal polynomials. We generalize Stembridge's result in Section \ref{sec4}, where we consider paths from $J\oplus\mathbf{u}\to\mathbf{v}\oplus I$. Multiple skew-orthogonal polynomials are introduced by following this generating function, with some applications in random walks starting from different sources. Moreover, we introduce a new concept of multiple partial-skew-orthogonal polynomials from this generating function.  In Section \ref{sec5}, non-intersecting paths between several vertex sets and intervals are considered. This is the most general case between vertex sets and intervals.

	\section{Lindstr\"om-Gessel-Vionnet theorem and applications into orthogonal polynomials}\label{sec2}
	
	In this part, we give some brief reviews on the connections between Lindstr\"om-Gessel-Vionnet lemma and orthogonal polynomials. There have been numerous references about this topic, and please refer to \cite{gharakhloo24,krattenthaler17,stanley99,sagan01} and references therein.
	
	\begin{definition}
		Let $V$ be a set of vertices, $E$ be a set of directed edges between vertices in $V$, and $D = (V,E)$ represents an acyclic graph formed by $V$ and $E$. We assume that the weight of each edge should be greater than zero.
	\end{definition}
	
	\begin{definition}[$\mathscr{P}(u;v)$]
		Let $u$ and $v$ be two vertices in $D$. We use $\mathscr{P}(u; v)$ to denote the set of all paths from $u$ to $v$ in the graph $D$.
	\end{definition}
	
	Let $P$ be a path from $u$ to $v$, that is, $P\in\mathscr{P}(u;v)$, then the weight of $P$ denoted by $\omega(P)$, is defined as the product of the weights of all edges on this path. Moreover, 
	the weight between two vertices $u$ and $v$ is defined as $h(u, v) = \sum\limits_{P \in {\mathscr{P}}(u;v)} {\omega (P)} $.
	
	\begin{definition}[$\mathscr{P}(\mathbf{u}; \mathbf{v})$, $\pouv$]
		Let $\mau=(u^{(1)},u^{(2)},\cdots,u^{(r)})$ and $\mav=(v^{(1)},v^{(2)},\cdots,v^{(r)})$ be two sequences of vertices in $V$, which are composed by $r$ points in an ascending order, namely, we assume that $u^{(1)}<\cdots<u^{(r)}$ and $v^{(1)}<\cdots<v^{(r)}$. We denote $\puv$ by the set of all paths from $\mau$ to $\mav$ in the graph $D$ in the order $u^{(1)}\to v^{(1)}$, $u^{(2)}\to v^{(2)}$, ..., and $u^{(r)}\to v^{(r)}$. Especially, we denote $\pouv$ as the set of all non-intersecting paths from $\mau$ to $\mav$.
	\end{definition}
	
	Let $\xi=(P_1,P_2,\cdots,P_r)\in\puv$ be an $r$-path starting from $\mau$ to $\mav$, where $P_i$ is a path from $u_i$ to $v_i$.
	The weight of $\xi$ is defined to be $\omega(\xi)=\omega(P_1,P_2,\cdots,P_n)=\prod_{i=1}^r \omega(P_i)$. Moreover, if we count the weight for all paths from $\mau$ to $\mav$, then we have
	\begin{align*}
		h(\mau,\mav)=\sum_{\xi\in\puv} \omega(\xi).
	\end{align*}
	In general, we refer this weight function as the generating function of all paths from $\mau$ to $\mav$, and denote it as GF$[\puv]$.
	For non-intersecting paths, we have GF$[\pouv]=\sum_{\xi\in\pouv}\omega(\xi)$.

	\begin{definition}[$D$-Compatible]
		If $I$ and $J$ are ordered sets of vertices in a graph $D$, then $I$ is said to be D-compatible with $J$ in the graph if and only if for any $u<u^{\prime}$ in $I$ and any $v>v^{\prime}$ in $J$, every path $P \in \mathscr{P}(u; v)$ intersects with every path $Q \in \mathscr{P}\left(u^{\prime}; v^{\prime}\right)$.
	\end{definition}
	
	Now, we could formally state the LGV lemma.
	
	\begin{theorem}[\cite{lindstorm73,gessel85,stembridge90}]\label{lgv}
		
		Let $\mathbf{u}=\left(u^{(1)}, u^{(2)}, \ldots, u^{(r)}\right)$ and $\mathbf{v}=\left(v^{(1)}, v^{(2)}, \ldots, v^{(r)}\right)$ be two ordered $r$-tuples of vertices in an acyclic directed graph $D$. If $\mathbf{u}$ and $\mathbf{v}$ are D-compatible, then
		\begin{equation}
			\gf\left[\mathscr{P}_0(\mathbf{u}; \mathbf{v})\right]=\operatorname{det}\left[h\left(u^{(i)}, v^{(j)}\right)\right]_{1 \leqslant i, j \leqslant r} .
		\end{equation}
		
	\end{theorem}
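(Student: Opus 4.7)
The plan is to prove the identity by expanding the determinant combinatorially and then matching terms through a sign-reversing involution, which is the classical Lindström--Gessel--Viennot argument.

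First I would expand the right-hand side using the Leibniz formula,
\begin{align*}
\det\left[h(u^{(i)}, v^{(j)})\right]_{1\leq i,j\leq r} = \sum_{\sigma\in S_r} \sgn(\sigma)\prod_{i=1}^{r} h(u^{(i)}, v^{(\sigma(i))}).
\end{align*}
By the definition of $h$ as the sum of weights of paths, each product $\prod_i h(u^{(i)}, v^{(\sigma(i))})$ becomes $\sum_{\xi} \omega(\xi)$, where $\xi$ ranges over all $r$-path systems going from $u^{(i)}$ to $v^{(\sigma(i))}$ for each $i$. Interchanging sums, the determinant equals $\sum_{(\sigma,\xi)} \sgn(\sigma)\,\omega(\xi)$, where the sum is over all pairs consisting of a permutation $\sigma$ and a path system $\xi$ realising the connection pattern $i\mapsto \sigma(i)$.

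Next I would split the sum according to whether $\xi$ is non-intersecting or not. For the non-intersecting part, I would invoke the $D$-compatibility hypothesis: if $\sigma$ were not the identity, then $\sigma$ would contain an inversion, i.e.\ indices $i<i'$ with $\sigma(i)>\sigma(i')$, forcing the paths $u^{(i)}\to v^{(\sigma(i))}$ and $u^{(i')}\to v^{(\sigma(i'))}$ to intersect by the very definition of $D$-compatibility. Hence non-intersecting systems can only occur for $\sigma=\mathrm{id}$, and they contribute exactly $\gf[\mathscr{P}_0(\mathbf{u};\mathbf{v})]$ to the right-hand side.

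The main step, and the one requiring the most care, is to show that the intersecting contributions cancel. I would build a sign-reversing involution $\varphi$ on the set of pairs $(\sigma,\xi)$ with $\xi$ intersecting. Given such a pair, let $i_0$ be the smallest index for which the path $P_{i_0}$ of $\xi$ meets another path of $\xi$, let $x$ be the first vertex along $P_{i_0}$ where such a collision occurs, and let $j_0>i_0$ be the smallest index of a path passing through $x$. Swap the tails of $P_{i_0}$ and $P_{j_0}$ after $x$ to obtain a new path system $\xi'$, and replace $\sigma$ by $\sigma\circ(i_0\,j_0)$. The involutive character and weight preservation are immediate since we are merely redistributing edges, while the sign flips because of the transposition; this is precisely the ``path-switching'' involution. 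Once this is verified, the intersecting contributions cancel in pairs, leaving the identity $\det[h(u^{(i)},v^{(j)})] = \gf[\mathscr{P}_0(\mathbf{u};\mathbf{v})]$. The delicate point is to check that $\varphi$ is well defined (the choice of $i_0$, $x$, $j_0$ is canonical so the same data are recovered from $\xi'$), which is where the ordering conventions on $\mathbf{u}$ and $\mathbf{v}$ together with $D$-compatibility are really used.
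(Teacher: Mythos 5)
Your argument is the classical Lindstr\"om--Gessel--Viennot proof and is correct: the Leibniz expansion, the observation that $D$-compatibility forces every non-intersecting system to realise $\sigma=\mathrm{id}$, and the canonical tail-swapping involution (whose well-definedness you rightly identify as the only delicate point) together give the identity. The paper itself states this theorem as a cited result and supplies no proof, but the path-switching involution you describe is precisely the technique the paper later adapts for its block-Pfaffian generalizations (e.g.\ the proof of Theorem \ref{thm4.3}), so your approach is the intended one.
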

	
	In literatures, there have been many applications of Lindst\"orm-Gessel-Vionnet lemma. For example, it has been applied to some tiling problem and random growth models \cite{johansson02,duits21}.
	Originally, non-intersecting paths arose in matroid theory \cite{lindstorm73}, which was later used to count tableaux and plane partitions \cite{gessel85}. In the references mentioned above, there is a specialized binomial determinant related to Toeplitz (respectively Hankel) determinant and orthogonal polynomials on the unit circle (respectively real line). Here we start with a non-intersecting random walk model and demonstrate how LGV lemma works in the random walk models and  orthogonal polynomial theory. 
	
	Let's consider $N$ independent simple random walks $X_1(t),\cdots,X_N(t)$ starting from positions $\mathbf{a}=(\alpha_1,\cdots,\alpha_N)$ at $t=0$, and ending at positions $\mathbf{b}=(\beta_1,\cdots,\beta_N)$ at $t=2T$ with conditions $\alpha_1<\cdots<\alpha_N$ and $\beta_1<\cdots<\beta_N$. There are two different models related to 
	this setting. One is a continuous-time model called non-intersecting Brownian motion with configuration $\mathbb{R}\times\mathbb{R}_{\geq 0}$ \cite{baik13,forrester11}. We first assume  $X(t)=(X_1(t),\cdots,X_N(t))$ to be $N$ independent standard Brownian motions with transition probability
	\begin{align*}
		p(x,y;t)=\frac{1}{\sqrt{2\pi t}}e^{-\frac{(x-y)^2}{2t}},
	\end{align*}
	which indicates the probability from $x$ to $y$ within time $t$. Moreover, let's assume that $D_N:=\{
	x_1<\cdots<x_N
	\}\subset \mathbb{R}^N$ is a admissible configuration.
	In this case, non-intersecting means that $X(t) \in D_N$ for all $ t \in (0,2T)$. According to the Karlin-McGregor's theorem \cite{karlin59}, the conditional probability density function (PDF) for passing through $\mathbf{x}=(x_1,\cdots,x_N)$ at time $t$ and ending at $\beta$, given the starting point $\alpha$ and the total duration of $2T$,  is expressed by
	\begin{align}\label{pdf1}
		\operatorname{det}\left(p\left(\alpha_i, x_j ; t\right)\right)_{i, j=1}^N \operatorname{det}\left(p\left(x_j, \beta_i ; 2T-t\right)\right)_{i, j=1}^N.
	\end{align}
	This result could be also recognized as an application of LGV lemma, where $\operatorname{det}\left(p\left(\alpha_i, x_j ; t\right)\right)_{i, j=1}^N$ could be viewed as the generating function from $\mathbf{a}$ to $\mathbf{x}$, and $\operatorname{det}\left(p\left(x_j, \beta_i ; 2T-t\right)\right)_{i, j=1}^N$ could be viewed as the generating function from $\mathbf{x}$ to $\mathbf{b}$. To calculate the conditional probability of event $\mathcal{N}_0$, we normalize it by dividing a normalization factor for reaching $\mathbf{b}$ from $\mathbf{a}$ within the total time of $2T$. In fact, this normalization factor is given by $\prod_{j=1}^N p(\alpha_j-\beta_j; 2T)$, where $p(x;T)$ is a normal distribution with mean zero and variance $T$. 
	Therefore, the conditional probability of the event $\mathcal{N}_0$ could expressed by
	$$
	\mathbb{P}_{\alpha, \beta}(\mathcal{N}_0) = \frac{\operatorname{det}\left[p(\alpha_j-\beta_k;2T)\right]_{j, k=1}^{N}}{\prod_{j=1}^{N} p(\alpha_j-\beta_j;2T)}.
	$$
	In fact, the PDF \eqref{pdf1} induces a bi-orthogonal system. We demonstrate this result in the following proposition.
	\begin{proposition}\label{prop2.6}
		If we denote $\psi_i(x)=p(\alpha_i,x;t)$ and $\phi_j(x)=p(x,\beta_j;2T-t)$, then we could introduce moments as
		\begin{align*}
			m_{\alpha_i,\beta_j}=\int_{\mathbb{R}}\psi_i(x)\phi_j(x)dx.
		\end{align*}
		Moreover, functions
		\begin{align*}
			P_n(x)=\left|\begin{array}{cccc}
				m_{\alpha_1,\beta_1}&m_{\alpha_1,\beta_2}&\cdots&m_{\alpha_1,\beta_n}\\
				m_{\alpha_2,\beta_1}&m_{\alpha_2,\beta_2}&\cdots&m_{\alpha_2,\beta_n}\\
				\vdots&\vdots&&\vdots\\
				m_{\alpha_{n-1},\beta_1}&m_{\alpha_{n-1},\beta_2}&\cdots&m_{\alpha_{n-1},\beta_n}\\
				\phi_1(x)&\phi_2(x)&\cdots&\phi_n(x)
			\end{array}
			\right|
		\end{align*}
		and 
		\begin{align*}
			Q_n(x)=\left|\begin{array}{cccc}
				m_{\alpha_1,\beta_1}&\cdots&m_{\alpha_1,\beta_{n-1}}&\psi_1(x)\\
				m_{\alpha_2,\beta_1}&\cdots&m_{\alpha_2,\beta_{n-1}}&\psi_2(x)\\
				\vdots&\vdots&&\vdots\\
				m_{\alpha_{n},\beta_1}&\cdots&m_{\alpha_{n},\beta_{n-1}}&\psi_n(x)\\
			\end{array}
			\right|
		\end{align*}
		are orthogonal to each other with the uniform measure. In other words, we have 
		\begin{align*}
			\int_{\mathbb{R}}P_n(x)Q_m(x)dx=H_n\delta_{n,m},
		\end{align*}
		where 
		\begin{align*}
			H_n=\det[p(\alpha_j-\beta_k;2T)]_{j,k=1}^{n-1}\det[p(\alpha_j-\beta_k;2T)]_{j,k=1}^{n}.
		\end{align*} 
	\end{proposition}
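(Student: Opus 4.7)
The plan is to exploit the multilinearity of the determinant to move the $x$-integration inside, reducing the integral $\int P_n(x) Q_m(x)\,dx$ to a moment determinant that collapses by column/row repetition. The starting observation is that $P_n(x)$ is a determinant whose only $x$-dependent row is the bottom row $(\phi_1(x),\dots,\phi_n(x))$, and similarly $Q_m(x)$ is a determinant whose only $x$-dependent column is the last column $(\psi_1(x),\dots,\psi_m(x))^\top$. Separately, the Chapman--Kolmogorov property for Gaussian transition densities gives $m_{\alpha_i,\beta_j}=\int p(\alpha_i,x;t)\,p(x,\beta_j;2T-t)\,dx=p(\alpha_i-\beta_j;2T)$, which will be needed at the very end to match the stated constant $H_n$.

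Next, by multilinearity of the determinant along the bottom row of $P_n$,
\begin{align*}
\int_{\mathbb{R}} P_n(x)Q_m(x)\,dx=
\left|
\begin{array}{ccc}
m_{\alpha_1,\beta_1}&\cdots&m_{\alpha_1,\beta_n}\\
\vdots&&\vdots\\
m_{\alpha_{n-1},\beta_1}&\cdots&m_{\alpha_{n-1},\beta_n}\\
\int\phi_1(x)Q_m(x)\,dx&\cdots&\int\phi_n(x)Q_m(x)\,dx
\end{array}
\right|.
\end{align*}
Applying the same trick to each entry $\int\phi_j(x)Q_m(x)\,dx$ (now along the last column of $Q_m$) shows it equals the $m\times m$ determinant obtained from $Q_m$ by replacing the last column with $(m_{\alpha_i,\beta_j})_{i=1}^{m}$. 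If $j\le m-1$ this determinant has two identical columns (column $j$ and the last column) and thus vanishes. Hence if $n<m$ all entries in the bottom row of the outer determinant vanish, giving $\int P_nQ_m\,dx=0$. The case $m<n$ is handled symmetrically by first putting the integral inside $Q_m$ along its last column and then repeating the argument along the bottom row of $P_n$: the resulting matrix has row $i$ equal to the last row whenever $i\le n-1$.

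For the diagonal case $n=m$, only the term $j=n$ survives: the replaced last column exactly becomes $(m_{\alpha_i,\beta_n})_{i=1}^{n}$, so $\int\phi_n(x)Q_n(x)\,dx=\det[m_{\alpha_i,\beta_k}]_{i,k=1}^{n}$. Plugging back and expanding the outer determinant along its now almost-zero bottom row produces
\begin{align*}
\int_{\mathbb{R}} P_n(x)Q_n(x)\,dx=\det[m_{\alpha_i,\beta_k}]_{i,k=1}^{n-1}\cdot\det[m_{\alpha_i,\beta_k}]_{i,k=1}^{n},
\end{align*}
and substituting $m_{\alpha_i,\beta_j}=p(\alpha_i-\beta_j;2T)$ identifies this with $H_n$. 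The only mildly delicate point is keeping track of the signs in the two successive Laplace expansions and verifying that the $j=n$ term in the final expansion carries a $+1$ coefficient (since it arises from the $(n,n)$-cofactor), but this is immediate from $(-1)^{n+n}=1$; everything else is formal multilinearity bookkeeping.
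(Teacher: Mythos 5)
Your proof is correct. The paper itself gives no proof of this proposition (it simply remarks that this is a general bi-orthogonal system and cites Borodin), and your argument --- pushing the $x$-integration into the determinants by multilinearity, killing the off-diagonal cases by repeated rows/columns, and using Chapman--Kolmogorov to identify $m_{\alpha_i,\beta_j}=p(\alpha_i-\beta_j;2T)$ in the diagonal case --- is exactly the standard computation the authors are implicitly relying on.
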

	This is a general bi-orthogonal system \cite{borodin99}. If we consider confluent starting and ending point, then we could define multiple orthogonal polynomials and multiple orthogonal polynomials of mixed type \cite{daems07}. To be precise, let's consider a confluent case where $N$ non-intersecting Brownian motions start from $k$ different points $\alpha_j$ with multiplicity $a_j$ for $j=1,\cdots,k$, and end at $l$ different points $\beta_j$ with multiplicity $b_j$ for  $j=1,\cdots,l$, such that $\sum_{j=1}^k a_j=\sum_{j=1}^l b_j=N$. Under this circumstance, we have the following proposition for multiple orthogonal polynomials of mixed type. 
	\begin{proposition}
		Let's denote $\psi_i^{(j)}(x)=x^i\omega_{1,j}(x)$ for $i=0,\cdots,a_j-1$ and $j=1,\cdots,k$ and $\phi_i^{(j)}(x)=x^i\omega_{2,j}(x)$ for $i=0,\cdots,b_j-1$ and $j=1,\cdots,l$, where $\omega_{1,j}(x)=p(\alpha_j,x;t)$ and $\omega_{2,j}(x)=p(x,\beta_j;2T-t)$. We define moments of mixed type by 
		\begin{align*}
			m_{p+q}^{(i,j)}=\int_{\mathbb{R}}x^{p+q}\omega_{1,i}(x)\omega_{2,j}(x)dx,\quad p=0,\cdots,a_i-1,\,q=0,\cdots,b_j-1
		\end{align*}
		for $i=1,\cdots,k,\,j=1,\cdots,l.$ In this case, multiple orthogonal polynomials of mixed type could be written as 
		\begin{align*}
			P_{\vec{a}-e_k,\vec{b}}(x)=\left|\begin{array}{ccc}
				A_{a_1,b_1}^{(1,1)}&\cdots&A_{a_1,b_l}^{(1,l)}\\
				\vdots&&\vdots\\
				A_{a_k-1,b_1}^{(k,1)}&\cdots&A_{a_k-1,b_l}^{(k,l)}\\
				\Phi_1(x)&\cdots&\Phi_l(x)
			\end{array}\right|,\quad Q_{\vec{a},\vec{b}-e_l}(x)=\left|\begin{array}{cccc}
				A_{a_1,b_1}^{(1,1)}&\cdots&A^{(1,l)}_{a_1,b_l-1}&\Psi^\top_1(x)\\
				\vdots&&\vdots&\vdots\\
				A_{a_k,b_1}^{(k,1)}&\cdots&A^{(k,l)}_{a_k,b_l-1}&\Psi^\top_k(x)
			\end{array}
			\right|,
		\end{align*}
		where 
		\begin{align*}
			\Psi_j(x)=(1,x,\cdots,x^{a_j-1})\omega_{1,j}(x),\,
			\Phi_j(x)=(1,x,\cdots,x^{b_j-1})\omega_{2,j}(x), \, A_{a_i,b_j}^{(i,j)}=(m_{p+q}^{(i,j)})_{p=0,\cdots,a_i-1\atop q=0,\cdots,b_j-1}.
		\end{align*}
		Moreover, these polynomials satisfy the following mixed orthogonality
		\begin{align*}
			&\int_{\mathbb{R}}P_{\vec{a},\vec{b}}(x)x^p\omega_{1,j}(x)dx=0,\quad p=0,\cdots,a_j-1,\,j=1\cdots,k,\\
			&\int_{\mathbb{R}}Q_{\vec{a},\vec{b}}(x)x^p\omega_{2,j}(x)dx=0,\quad p=0,\cdots,b_j-1,\,j=1\cdots,l.
		\end{align*}
	\end{proposition}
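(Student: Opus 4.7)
The plan is a single determinantal manipulation, with the two orthogonality statements established by completely symmetric arguments: I will outline the one for $P_{\vec a-e_k,\vec b}(x)$; the one for $Q_{\vec a,\vec b-e_l}(x)$ then follows by interchanging rows with columns and swapping $\omega_{1,\bullet}\leftrightarrow\omega_{2,\bullet}$ and $\Phi\leftrightarrow\Psi$.

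First I would pull the integral inside the determinant defining $P_{\vec a-e_k,\vec b}(x)$. Since only its last row $(\Phi_1(x),\ldots,\Phi_l(x))$ depends on $x$, multiplying by $x^p\omega_{1,j}(x)$ and integrating replaces that row by the block-vector whose $i$-th entry is $\int x^p\omega_{1,j}(x)\Phi_i(x)\,dx$. Unpacking $\Phi_i(x)=(1,x,\ldots,x^{b_i-1})\omega_{2,i}(x)$ and the definition of $m_{p+q}^{(j,i)}$, this block becomes $(m_p^{(j,i)},m_{p+1}^{(j,i)},\ldots,m_{p+b_i-1}^{(j,i)})$, which is exactly the $(p+1)$-st row of the moment block $A_{a_j,b_i}^{(j,i)}$.

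The key observation is then that for every admissible pair $(j,p)$ this replacement row already appears in the original matrix: for $j\in\{1,\ldots,k-1\}$ and $0\le p\le a_j-1$ it sits inside the full-height $j$-th horizontal block, while for $j=k$ and $0\le p\le a_k-2$ it sits inside the short $k$-th block of height $a_k-1$. In every such case the modified matrix has two equal rows, so its determinant vanishes and the stated orthogonality follows. The $Q$-polynomial is handled by the dual argument applied to its rightmost column $(\Psi_1(x)^\top,\ldots,\Psi_k(x)^\top)^\top$, which after integration against $x^p\omega_{2,j}(x)$ is replaced by a column of moments already present in the matrix.

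The main obstacle will be purely bookkeeping: one has to verify that the ranges of $p$ and $j$ in the stated orthogonality align exactly with the shape $\vec a-e_k$ (respectively $\vec b-e_l$) of the determinantal definition, in particular that the $k$-th horizontal block has height $a_k-1$ rather than $a_k$ so that the ``missing'' row is precisely the one supplied by the integration. Beyond this accounting, no tool stronger than linearity of the integral, the definition of the mixed moments, and the alternating property of the determinant is required; the construction is essentially Cramer's rule for the linear system that uniquely characterises the multiple orthogonal polynomials of mixed type, cf.\ \cite{daems07}.
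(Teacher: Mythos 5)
Your argument is correct and is the standard one (linearity of the determinant in the $x$-dependent row/column, turning the integrated row into a row of mixed moments that duplicates an existing row); the paper states this proposition without proof, so there is nothing to compare against, and your write-up supplies exactly the missing verification. Your bookkeeping caveat is the one real issue with the statement itself: the determinant defines $P_{\vec a-e_k,\vec b}$ (respectively $Q_{\vec a,\vec b-e_l}$), so the vanishing holds for $p=0,\dots,a_j-1$ when $j\neq k$ but only for $p=0,\dots,a_k-2$ when $j=k$ (and dually for $j=l$), the excluded index giving the nonzero normalisation rather than zero.
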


	Another application is to consider a discrete-time random walk, defined in the configuration space $\mathbb{Z}\times \mathbb{Z}_{\geq 0}$. At each discrete time step, the simple random walk increase or decrease by one step with equal probability. If we denote $\alpha_j=2x_j$ and $\beta_j=2y_j$ for all $j=1,2,\cdots,N$, and assume that all paths don't intersect, then the number of all non-intersecting paths is given by a binomial determinant \cite{krattenthaler00,forrester02}
	\begin{align*}
		\det\left[
		2T \choose T+y_k-x_j
		\right]_{j,k=1}^N.
	\end{align*}
	Such a model is sometimes referred to as vicious random walkers which means that the walkers can not be in the same position at each discrete step. It was shown in \cite{forrester02} that this binomial determinant is related to symmetric Hahn polynomials. However, the following proposition demonstrates that such a binomial determinant could also be related to a Toeplitz determinant and orthogonal polynomials on the unit circle (OPUC).
	\begin{proposition}(\cite{gharakhloo24} with a correction)
		The binomial coefficient could be written by a residue formula
		\begin{align*}
			{2T \choose T+y_k-x_j}
			=\int_{\mathbb{T}} \omega(z)z^{-(y_k-x_j)}\frac{dz}{2\pi i z}:=m_{y_k-x_j}, \quad\omega(z)=(1+z)^{2T}z^{-T},
		\end{align*}
		where $\mathbb{T}$ denotes a unit circle. Moreover, if $x_j=y_j=j$ for $j=1,2,\cdots,N$, then the binomial determinant could be expressed as a Toeplitz determinant $\det(m_{k-j})_{j,k=1}^N$.
	\end{proposition}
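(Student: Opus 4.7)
The proposition has two independent claims, and my plan is to handle each in turn using only the binomial theorem and a direct substitution.

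For the integral representation, I would start by expanding $\omega(z) = (1+z)^{2T} z^{-T}$ via the binomial theorem as $\sum_{n=0}^{2T} \binom{2T}{n} z^{n-T}$. Multiplying by $z^{-(y_k-x_j)}$ and by $\frac{1}{2\pi i z}$ yields a Laurent series in which the coefficient of $z^{-1}$ corresponds to the index $n = T + y_k - x_j$. Integrating term by term around the unit circle $\mathbb{T}$ and invoking $\int_{\mathbb{T}} z^{m}\,\frac{dz}{2\pi i z} = \delta_{m,0}$ then isolates precisely the term $\binom{2T}{T+y_k-x_j}$. This is essentially a single-line computation once the expansion is in place; the only point worth being careful about is that the identity really requires $0 \le T + y_k - x_j \le 2T$ for the binomial coefficient to be nonzero, which matches the fact that outside this range the relevant power of $z$ never appears in the expansion of $(1+z)^{2T}$.

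For the Toeplitz reduction, I would simply substitute $x_j = j$ and $y_k = k$ into the binomial-coefficient matrix, so that each entry depends on $j$ and $k$ only through the difference $k - j$. With $m_r := \int_{\mathbb{T}} \omega(z) z^{-r} \frac{dz}{2\pi i z}$ as defined from part one, the matrix becomes $(m_{k-j})_{j,k=1}^{N}$, which is by definition a Toeplitz matrix, and hence its determinant is the Toeplitz determinant $\det(m_{k-j})_{j,k=1}^{N}$.

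I do not expect any substantial obstacle: the statement is essentially a bookkeeping identity packaging the Fourier/residue representation of a binomial coefficient, followed by the observation that setting $x_j = y_j = j$ turns a Hankel-like structure into a Toeplitz one. The only point I would double-check before writing the final version is the sign convention in the exponent of $z$ and the placement of the extra $1/z$ in the measure $\frac{dz}{2\pi i z}$, since different sources normalize these differently and an off-by-one in the exponent would break the identification of $m_{k-j}$ with the binomial coefficient.
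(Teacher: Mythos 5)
Your proof is correct and is exactly the standard argument: expand $(1+z)^{2T}z^{-T}$ by the binomial theorem, pick off the coefficient of $z^{0}$ against the measure $\frac{dz}{2\pi i z}$ via $\int_{\mathbb{T}} z^{m}\,\frac{dz}{2\pi i z}=\delta_{m,0}$, and then observe that $x_j=y_j=j$ makes every entry a function of $k-j$ alone. The paper states this proposition without supplying a proof, so there is nothing to contrast with; your computation, including the remark that both sides vanish outside $0\le T+y_k-x_j\le 2T$, is precisely what is needed.
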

	Similar to the Proposition \ref{prop2.6}, if we define
	\begin{align*}
		P_n(x)=\left|\begin{array}{cccc}
			m_0&m_1&\cdots&m_{n}\\
			m_{-1}&m_0&\cdots&m_{n-1}\\
			\vdots&\vdots&&\vdots\\
			m_{-n+1}&m_{-n+2}&\cdots&m_{1}\\
			1&z&\cdots&z^n
		\end{array}
		\right|,
	\end{align*}
	then one could obtain the following orthogonal relation
	\begin{align*}
		\int_{\mathbb{T}} \omega(z)P_n(z)\bar{P}_m(\bar{z})\frac{dz}{2\pi iz}=H_n\delta_{n,m},
	\end{align*}
	where $\bar{z}$ is the complex conjugate of $z$ and 
	$H_n=\det(m_{k-j})_{k,j=0}^{n-1}\det(m_{k-j})_{k,j=0}^{n}.$ Regarding with general $\{x_j,y_j\}$ and slat Toeplitz determinants, one could refer to \cite{gharakhloo24,gharakhloo23} for more examples.
	
	\section{Stembridge's results on Pfaffians and applications into skew-orthogonal polynomials}\label{sec3}
	In \cite{stembridge90}, Stembridge generalized the result of Lindstr\"om-Gessel-Viennot, by considering non-intersecting paths from a set of vertices to an ordered subset $I\subset V$, whose number of vertices is indefinite. In this section, let's assume that $\mathbf{u}=\left(u^{(1)}, u^{(2)}, \ldots, u^{(r)}\right)$ is a finite sequence of vertices in $V$.
	
	\begin{definition}[$\mathscr{P}({u};I)$, $\mathscr{P}(\mathbf{u};I)$, $\mathscr{P}_0(\mathbf{u};I)$]
		Let $\mathscr{P}({u};I)$ be the set of all paths from $u$ to any $v\in I$, $\mathscr{P}(\mathbf{u}; I)$ be the r-tuple set of paths $(P_1,\cdots,P_r)$ where $P_i\in\mathscr{P}(u^{(i)},I)$, and  
		$\mathscr{P}_0(\mathbf{u};I)$ be the non-intersecting paths in 	$\mathscr{P}(\mathbf{u};I)$.
	\end{definition}
	\begin{definition}[$Q_I(\mathbf{u})$]
		$Q_I(\mathbf{u})$ refers to the generating function of all non-intersecting paths from $\mathbf{u}$ to $I$ in the graph $D$. Literally, we can write $Q_I(\mathbf{u})=Q_I\left(u^{(1)}, u^{(2)}, \ldots, u^{(r)}\right)=\gf\left[\mathscr{P}_0(\mathbf{u} ; I)\right]$.
	\end{definition}
	Especially, if $\mau=\{u\}$, which is a single vertex, then 
	\begin{align}\label{qi}
		Q_I(u)=\sum_{v\in I}h(u,v).
	\end{align}
	If $\mau=(u^{(1)},u^{(2)})$, then according to LGV theorem, we have
	\begin{align}\label{qij}
		\begin{aligned}
			Q_I(u^{(1)},u^{(2)})&=\sum_{v^{(1)}<v^{(2)}\in I}\det\left(\begin{array}{cc}
				h(u^{(1)},v^{(1)})&h(u^{(1)},v^{(2)})\\
				h(u^{(2)},v^{(1)})&h(u^{(2)},v^{(2)})
			\end{array}
			\right)\\
			&=\sum_{v^{(1)},v^{(2)}\in I}h(u^{(1)},v^{(1)})h(u^{(2)},v^{(2)})\sgn(v^{(2)}-v^{(1)}),
		\end{aligned}
	\end{align}
	where $\sgn$ is a sign function defined by
	\begin{align*}
		\sgn(a)=\left\{\begin{array}{ll}
			1,&\text{if $a>0$},\\
			0,&\text{if $a=0$},\\
			-1,&\text{if $a<0$}.
		\end{array}
		\right.
	\end{align*}
	\begin{remark}
		The generating function for 2-points \eqref{qij} naturally induces a skew-symmetric inner product. If we define 
		\begin{align}\label{sin}
			\begin{aligned}
				\langle\cdot,\cdot\rangle:\,&\mathbb{R}[x]\times\mathbb{R}[x]\to\mathbb{R}\\
				&\langle f_1(x),f_2(x)\rangle\mapsto \int_{\mathbb{R}\times\mathbb{R}}f_1(x)f_2(y)\sgn(y-x)w(x)w(y)dxdy,
			\end{aligned}
		\end{align}
		then this inner product is skew symmetric, i.e. we have $\langle f_1(x),f_2(x)\rangle=-\langle f_1(x),f_2(x)\rangle$. Moreover, if we take
		\begin{align*}
			f_1(x)=h(u^{(1)},x),\quad f_2(x)=h(u^{(2)},x),\quad w(x)=\sum\delta_{x\in I},
		\end{align*}
		then we see that $Q_I(u^{(1)},u^{(2)})=\langle f_1(x),f_2(x)\rangle.$
	\end{remark}
	It was found by Stembridge that $\gf[\mathscr{P}_0(\mathbf{u};I)]$ could be written as a Pfaffian, whose elements are given by \eqref{qi} and \eqref{qij}. To this end, let's introduce the concept of 1-factor, and the graphic definition of Pfaffians. 
	
	\begin{definition}[1-Factor]
		Let $\mau=\left(u^{(1)}, u^{(2)}, \ldots, u^{(2n)}\right)$. For each perfect matching for these vertices, the set of all pairings is called a 1-factor. Denote the set of all 1-factors for $\mathbf{u}$ as $\mathscr{F}(\mathbf{u})$. Specifically, if the set consists of the first $2n$ consecutive natural numbers, then the set of 1-factors can be simply denoted as $\mathscr{F}_{2n}$.
	\end{definition}
	
	For example, for a set $\mau=\left({u^{(1)}},{u^{(2)}},{u^{(3)}},{u^{(4)}}\right)$, the perfect matching $\left\{ \left\{ {u^{(1)}, u^{(2)}} \right\}, \left\{ {u^{(3)}, u^{(4)}} \right\} \right\}$ is a 1-factor in  $\mathscr{F}(\mathbf{u})$.

	\begin{definition}[Crossing Number]
		For any 1-factor $\pi \in \mathscr{F}(\mathbf{u})$ for the set $\left(u^{(1)}, u^{(2)}, \ldots, u^{(2n)}\right)$, if we align vertices in a straight line with order $u^{(1)}<u^{(2)}<\cdots<u^{(2n)}$, and connect the paired ones with curves above the line, then the number of intersections of these curves is defined as the crossing number, denoted by $\operatorname{cs}(\pi)$. The sign of $\pi$ is defined as $\operatorname{sgn}(\pi)=(-1)^{\operatorname{cs}(\pi)}$.
		
		
		
		
		
	\end{definition}
	
	\begin{definition}[Definition of Pfaffians]
		If $A=\left[a_{i,j}\right]_{1 \leqslant i,j \leqslant 2n}$ is a skew symmetric matrix of order $2n$ such that $a_{i,j}=-a_{j,i}$, then we can define the Pfaffian of $A$ by
		\begin{align}\label{expansionformula}
			\pf(A)=\sum_{\pi \in \mathscr{F}_{2n}} \operatorname{sgn}(\pi) \prod_{(i, j) \in \pi} a_{i j}.
		\end{align}
	\end{definition}

	\subsection{The generating function for non-intersecting paths from $\mau\to I$}
	The following result was given by Stembridge in \cite[Thm. 3.1]{stembridge90}.
	\begin{theorem}\label{stem1}
		
		Let $\mathbf{u}=\left(u^{(1)}, u^{(2)}, \ldots, u^{(2n)}\right)$ be $2n$-tuple of vertices in an acyclic directed graph $D$. If $I \subset V$ is a totally ordered subset of vertices such that $\mathbf{u}$ is D-compatible with $I$, then
		$$
		Q_I\left(u^{(1)}, u^{(2)}, \ldots, u^{(2n)}\right)=\operatorname{Pf}\left[Q_I (u^{(i)}, u^{(j)})\right]_{1 \leqslant i,j \leqslant 2n},
		$$
		where $Q_I(u^{(i)},u^{(j)})$ is given by \eqref{qij}. Moreover, if $\mau=(u^{(1)},u^{(2)},\cdots,u^{(2n+1)})$, then the generating function of $\mau\to I$ is given by
		\begin{align*}
			Q_I\left(u^{(1)},u^{(2)},\cdots,u^{(2n+1)}\right)=\operatorname{Pf}\left[
			\begin{array}{cc}
				Q_I(u^{(i)},u^{(j)}) &Q_I(u^{(i)})\\
				-Q_I(u^{(j)})&0
			\end{array}
			\right]_{1\leq i,j\leq 2n+1},
		\end{align*}
		where $Q_I(u^{(i)})$ is given by \eqref{qi} and $Q_I(u^{(i)},u^{(j)})$ is given by \eqref{qij}.
	\end{theorem}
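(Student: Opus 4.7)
The plan is to follow Stembridge's path-switching involution. For the even case, I would first expand the right-hand side using the combinatorial definition \eqref{expansionformula} of the Pfaffian and substitute the explicit formula \eqref{qij} into each factor, obtaining
\begin{align*}
\pf[Q_I(u^{(i)},u^{(j)})]_{i,j=1}^{2n}=\sum_{\pi\in\mathscr{F}_{2n}}\sgn(\pi)\prod_{\{i,j\}\in\pi,\,i<j}\sum_{v^{(1)}_{ij},v^{(2)}_{ij}\in I}h(u^{(i)},v^{(1)}_{ij})\,h(u^{(j)},v^{(2)}_{ij})\,\sgn(v^{(2)}_{ij}-v^{(1)}_{ij}).
\end{align*}
Interchanging sum and product exhibits the right-hand side as a signed sum over triples $(\pi,V,\Xi)$, where $\pi$ is a 1-factor of $\{1,\dots,2n\}$, $V$ assigns to each pair $\{i,j\}\in\pi$ with $i<j$ an ordered couple $(v^{(1)}_{ij},v^{(2)}_{ij})\in I\times I$, and $\Xi=(P_1,\dots,P_{2n})$ is a path system with $P_i\colon u^{(i)}\to v^{(1)}_{ij}$ and $P_j\colon u^{(j)}\to v^{(2)}_{ij}$ for each pair. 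The weight of such a triple is $\omega(\Xi)$, and its sign is $\sgn(\pi)\prod_{\{i,j\}\in\pi,\,i<j}\sgn(v^{(2)}_{ij}-v^{(1)}_{ij})$.

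The crucial step is a sign-reversing involution on configurations in which some two of the $2n$ paths share a vertex, where I also treat coincident endpoints as an intersection at the terminal vertex. Under a fixed canonical order on the indices and on the vertices of $D$, the involution locates the first offending pair $(P_a,P_b)$ and swaps their tails past the first common vertex; this leaves the edge multiset untouched, so $\omega(\Xi)$ is preserved. The swap modifies $\pi$ by a transposition between two different pairs and simultaneously reshuffles the associated ordered endpoints, so both $\sgn(\pi)$ and $\prod\sgn(v^{(2)}_{ij}-v^{(1)}_{ij})$ change. The hard part is to check that these two changes combine to an overall factor $-1$; I would handle this, as in Stembridge's original argument, by identifying the product of signs with that of the permutation reading the endpoints in the order dictated by $\pi$, so that any transposition induced by a tail-swap flips it by $-1$.

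The configurations surviving the cancellation are precisely those in which the $2n$ paths are pairwise disjoint with distinct endpoints $w_1,\dots,w_{2n}$. D-compatibility then forces $u^{(k)}\to w_k$ with $w_1<\cdots<w_{2n}$, so each surviving path system is a genuine element of $\pouv$ for $\mav=(w_1,\dots,w_{2n})\subset I$. For such a system, summing its signed contributions over all $\pi\in\mathscr{F}_{2n}$ gives $\omega(\Xi)\sum_{\pi\in\mathscr{F}_{2n}}\sgn(\pi)=\omega(\Xi)$ by the elementary identity $\sum_{\pi\in\mathscr{F}_{2n}}\sgn(\pi)=1$ (the Pfaffian of the all-ones skew matrix). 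Summing over all non-intersecting systems yields $\gf[\mathscr{P}_0(\mau;I)]=Q_I(\mau)$, completing the even case.

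For the odd case I would reduce to the even one by an augmentation trick: introduce a virtual source $u^{(2n+2)}$ placed after all of $\mau$ and a virtual endpoint $v^*$ placed after all of $I$, with $h(u^{(2n+2)},v^*)=1$ and $h(u^{(2n+2)},v)=0=h(u^{(i)},v^*)$ for $v\in I$ and $i\le 2n+1$. A direct calculation yields $Q_{I\cup\{v^*\}}(u^{(i)},u^{(2n+2)})=Q_I(u^{(i)})$ and $Q_{I\cup\{v^*\}}(u^{(i)},u^{(j)})=Q_I(u^{(i)},u^{(j)})$ for $i,j\le 2n+1$. Moreover, the augmented generating function collapses to $Q_I(\mau)$ because $u^{(2n+2)}$ is forced to terminate at $v^*$ with unit weight. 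Applying the already-proved even-case formula to the augmented $(2n+2)$-tuple then produces exactly the stated augmented Pfaffian, whose vanishing lower-right corner comes from the Pfaffian convention on the diagonal.
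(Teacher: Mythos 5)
Your proposal is correct and takes essentially the same approach as the paper and as Stembridge's original argument: for the even case, the Pfaffian expansion into configurations followed by a sign-reversing path-switching involution is exactly the technique the paper itself deploys in proving Theorem \ref{thm4.3}, and your virtual source/endpoint augmentation for the odd case is precisely the auxiliary-vertex trick the paper uses for the odd part of Theorem \ref{stem2}. I see no gaps beyond the sign-verification details you explicitly flag and correctly attribute to Stembridge's Lemma 2.1.
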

	\begin{remark}
		It should be remarked that if we consider the generating function for non-intersecting paths from $J$ to $\mav$, then we have the expression
		\begin{align*}
			\gf[\Mp_0(J;\mav)]=\tilde{Q}_J(v^{(r)},\cdots,v^{(1)}),
		\end{align*}
		where 
		\begin{align*}
			\tilde{Q}_J(v)=\sum_{u\in J}h(u,v),\quad \tilde{Q}_J(v^{(2)},v^{(1)})=\sum_{u^{(1)},u^{(2)}\in J}h(u^{(1)},v^{(1)})h(u^{(2)},v^{(2)})\sgn(u^{(2)}-u^{(1)}),
		\end{align*}
		and $\gf[\Mp_0(J;\mav)]$ could be computed by above 1-vertex and 2-vertex formulas for any $r\in\mathbb{N}_+$.
	\end{remark}

		\subsection{The generating function of non-intersecting paths from $\mau\to\mav\oplus I$}
		In this part, let's assume $\mau=(u^{(1)},u^{(2)},\cdots,u^{(r)})$ and $\mav=(v^{(1)},v^{(2)},\cdots,v^{(s)})$ with $s\leq r$.
		\begin{definition}[$\mathbf{v} \oplus I$]\label{voi}
			$\mathbf{v} \oplus I$ is defined as a union of vertices $\mathbf{v}$ and $I$, where $\mav$ and $I$ are disjoint, and all vertices are arranged such that every $v^{(i)}$ in $\mathbf{v}$ precedes vertices in $I$. 
		\end{definition}
		\begin{definition}[$\mathscr{P}_0(\mathbf{u}; \mathbf{v} \oplus I)$]
			$\mathscr{P}_0(\mathbf{u}; \mathbf{v} \oplus I)$ is defined as the set of all non-intersecting paths  $\left(P_1, \ldots, P_r\right)$, where for $1\leq i\leq s$, $P_i\in\Mp(u^{(i)};v^{(i)})$; while for $s+1\leq i\leq r$, $P_i\in\Mp(u^{(i)};I)$.
		\end{definition}
		
		\begin{theorem}[\cite{stembridge90} with a supplement statement]\label{stem2}
			
			Let $\mathbf{u}=\left(u^{(1)}, u^{(2)}, \ldots, u^{(r)}\right)$ and $\mathbf{v}=\left(v^{(1)}, \ldots, v^{(s)}\right)$ be sequences of vertices in the acyclic directed graph $D$, and suppose $I$ is a totally ordered subset of $V$ such that $\mathbf{u}$ and $\mathbf{v} \oplus I$ are $D$-compatible. If $r+s$ is even, then
			\begin{align*}
				\gf \left[ \mathscr{P}_0({\bf{u}};{\bf{v}} \oplus I) \right] = {\mathop{\rm Pf}\nolimits} \left[ \begin{array}{cc}
					A&B\\
					-B^\top&\mathbf{0}
				\end{array}
				\right],
			\end{align*}
			where 
			\begin{align}\label{matrixab}
				A=\left(
				Q_I(u^{(i)},u^{(j)})
				\right)_{1\leq i,j\leq r},\quad B=\left(
				h(u^{(i)},v^{(s+1-j)})
				\right)_{1\leq i\leq r,\,1\leq j\leq s}.
			\end{align}
			Moreover, if $r+s$ is odd, then
			\begin{align*}
				\gf\left[\mathscr{P}_0(\mathbf{u} ; \mathbf{v} \oplus I)\right]=\operatorname{Pf}\left[\begin{array}{cc}
					\hat{A}&\hat{B}\\
					-\hat{B}^\top &0
				\end{array}\right],
			\end{align*}
			where 
			\begin{align*}
				\hat{A}=\left(\begin{array}{cc}
					A&\vec{\alpha}\\
					-\vec{\alpha}^\top&0
				\end{array}
				\right),\hat B = \left( {\begin{array}{*{20}{c}}
						B\\
						0
				\end{array}} \right),\quad \vec{\alpha}=\left(
				Q_I(u^{(i)})
				\right)_{1\leq i\leq r},
			\end{align*}		
			and matrices $A$ and $B$ are given by \eqref{matrixab}.
			
		\end{theorem}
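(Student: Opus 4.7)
The plan is to mimic the classical path-switching involution used by Stembridge in the proof of Theorem \ref{stem1}, now applied to the augmented vertex set $\mathbf{v}\oplus I$. I would first treat the even case $r+s$ even in full detail, and then describe the bordered modification needed for the odd case.

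First, expand the RHS by the Pfaffian definition \eqref{expansionformula} as a signed sum over 1-factors $\pi$ on the index set $\{1,\dots,r+s\}$. Since the lower-right $s\times s$ block is zero, every surviving $\pi$ must match each of the $s$ $\mathbf{v}$-indices with a distinct $\mathbf{u}$-index; hence $\pi$ decomposes into a partial bijection $\mu$ between $\mathbf{v}$ and an $s$-subset $S\subset\{1,\dots,r\}$, together with a 1-factor $\rho$ on $\mathbf{u}_{\bar S}$. Using \eqref{qij}, I further expand every $Q_I(u^{(i)},u^{(j)})$ entry appearing in $\rho$ as a signed double sum over ordered pairs of targets in $I$. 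After substitution, the Pfaffian becomes a signed sum indexed by configurations of the form (matching $\mu$, target assignment for each $u$-vertex, path family realizing the assignment), where some $u$-vertices are sent to $\mathbf{v}$-targets via $\mu$ and the remainder to $I$-targets via the $Q_I$ expansion.

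Next, on the subset of configurations for which at least two paths share a vertex, I would define a sign-reversing involution in the spirit of \cite{stembridge90,krattenthaler17}: locate the lexicographically first shared vertex of any two paths, and swap their tails there. Careful bookkeeping shows that this operation reverses the combined sign coming from the crossing number of $\pi$, the $\sgn$ factors in \eqref{qij}, and (if a $\mathbf{v}$-endpoint is involved) the position of $\mu$; intersecting configurations therefore cancel pairwise, and only non-intersecting path systems remain as fixed points. By the $D$-compatibility hypothesis on $\mathbf{u}$ and $\mathbf{v}\oplus I$, every fixed point must realize the natural assignment $u^{(i)}\to v^{(i)}$ for $1\le i\le s$ and $u^{(i)}\to I$ (with endpoints strictly increasing within $I$) for $s+1\le i\le r$, since any other choice would force a crossing between a path to $\mathbf{v}$ and a path to $I$. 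The reversed column ordering $v^{(s+1-j)}$ in the definition \eqref{matrixab} of $B$ is engineered precisely so that these fixed points contribute with total sign $+1$, yielding $\gf[\Mp_0(\mathbf{u};\mathbf{v}\oplus I)]$.

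For the odd case, the bordered entry $Q_I(u^{(i)})=\sum_{w\in I}h(u^{(i)},w)$ plays the role of a \emph{ghost pairing}: the unmatched $u$-vertex is formally paired with a virtual vertex whose associated weight already sums over $I$, contributing a direct path from $u^{(i)}$ to some $w\in I$, and the same involution argument carries through without change. The hardest part throughout is the sign bookkeeping in the involution step: three separate sources of signs (the crossing number of $\pi$, the sign factors in \eqref{qij}, and the column reversal in the definition of $B$) must conspire so that the tail-swap is sign-reversing and so that the natural non-intersecting system has sign $+1$. This conspiracy is exactly what motivates the specific column ordering $v^{(s+1-j)}$, and I expect it to occupy the bulk of the proof.
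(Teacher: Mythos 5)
Your outline is correct, but it takes a genuinely different (and much heavier) route than the paper. For the even case the paper proves nothing: it quotes Stembridge's original result, whose proof is precisely the expansion-plus-tail-swap involution you sketch, so reproducing that argument is legitimate but redundant here. The only thing the paper actually proves for Theorem \ref{stem2} is the odd case, and it does so not by rerunning the involution on the bordered Pfaffian but by a one-line reduction to the even case: adjoin an auxiliary vertex $u^{(r+1)}$ placed after all of $\mathbf{u}$ and $I$, with weight $1$ from $u^{(r+1)}$ to itself and weight $0$ to every other vertex, and set $\mathbf{u}^*=(u^{(1)},\dots,u^{(r)},u^{(r+1)})$, $I^*=I\cup\{u^{(r+1)}\}$. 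Then $h(u^{(r+1)},v^{(j)})=0$ and, by \eqref{qij}, $Q_{I^*}(u^{(i)},u^{(r+1)})=\sum_{w\in I}h(u^{(i)},w)=Q_I(u^{(i)})$ because the only reachable target of $u^{(r+1)}$ is itself and it is the largest element of $I^*$; hence the even-case Pfaffian for $(\mathbf{u}^*;\mathbf{v}\oplus I^*)$ is literally the bordered Pfaffian with blocks $\hat A$ and $\hat B$, while $\gf[\mathscr{P}_0(\mathbf{u}^*;\mathbf{v}\oplus I^*)]=\gf[\mathscr{P}_0(\mathbf{u};\mathbf{v}\oplus I)]$ since the extra path carries weight $1$ and intersects nothing. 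Your ``ghost pairing'' for the border is exactly this auxiliary vertex in disguise, but the reduction buys something concrete: the sign bookkeeping you correctly identify as the bulk of the work is inherited wholesale from the even case and never has to be redone for the bordered row and column. If you want to see the involution carried out with the full sign analysis within this paper, it appears in the proof of Theorem \ref{thm4.3}, which contains the present theorem as the degenerate case where $J$ is empty.
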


				\begin{proof}
					Here we give a proof when $r+s$ is odd, which is omitted in Stembridge's paper. We assume that there is an auxiliary vertex $u^{(r+1)}$ added in the graph $D$. In order to satisfy $D$-compatibility, we might as well assume that this vertex follows all vertices in $\mathbf{u}$ and $I$. Furthermore, we require that weights of $u^{(r+1)}$ to itself is $1$, and to other vertices are zero. Let's denote $\mathbf{u}^*=\left(u^{(1)}, \ldots, u^{(r)}, u^{(r+1)}\right)$, $I^*=I \cup\left\{u^{(r+1)}\right\}$, then one could show that 
					\begin{align*}
						\mathrm{GF}\left[\mathscr{P}_0(\mathbf{u} ; \mathbf{v} \oplus I)\right]=\operatorname{GF}\left[\mathscr{P}_0\left(\mathbf{u}^* ; \mathbf{v} \oplus I^*\right)\right],
					\end{align*}
					which gives the result.					
				\end{proof}
				
				It should be remarked that the Theorem \ref{stem2} comprises previous results in Theorem \ref{lgv} and Theorem \ref{stem1}. If $\mav$ is an empty set, then it degenerates to Theorem \ref{stem1}, while if $I$ is an empty set and $r=s$, then it degenerates to the LGV lemma. Therefore, we want to generalize Stembridge's result to consider combinatoric explanations for full block Pfaffians. Before that, a comparison to Ishikawa-Wakayama's result is made to show the strength of Stembridge's method.
				
				\subsection{A comparison to Ishikawa-Wakayama's result}		
				In \cite[Theorem 4.3]{ishiwaka05}, Ishikawa and Wakayama obtained similar results to Theorem \ref{stem1}, which reads $$
				\sum_{S \in{I \choose m}} \operatorname{Pf}\left(A_S^S\right) \sum_{\pi \in S_m} \operatorname{sgn}(\pi) \operatorname{GF}\left[\mathscr{P}_0\left(u^\pi, I\right)\right]=\operatorname{Pf}(H A H^\top).
				$$
				In the above formula, $\mathbf{u}=\left(u^{(1)} , \ldots, u^{(m)}\right)$ is a set of points consisting of an even number of elements, $I=\left\{V_1<\cdots<V_N\right\}$ is a finite set of even-number vertices, and ${I \choose m}$ represents the set of all subsets of $I$ containing exactly $m$ elements. Moreover, $A=\left(a_{V_i V_j}\right)_{1 \leq i<j \leq N}$ is a skew-symmetric matrix and  $A_S^S$ is the submatrix of $A$ obtained
				by picking up the rows and  columns indexed by S, and $H=\left(h\left(u^{(i)}, V_j\right)\right)_{\substack{1 \leq i \leq m ,1 \leq j \leq N}}$. 
				
				To relax the condition of $D$-compatibility,  Ishikawa and Wakayama imposed an additional condition that the set of vertices $I$ should be finite and the number of vertices should be even. When $\mathbf{u}$ and $I$ are $D$-compatible, we can get the same result stated as in Theorem \ref{stem1}.
				Moreover, Ishikawa and Wakayama generalized Theorem \ref{stem2} to  \cite[Theorem 4.4]{ishiwaka05}, where the $D$-compatible condition is relaxed. For both $(m+n)$ and $N$ being even, $\mathbf{v}=\left(v^{(1)}, v^{(2)}, \ldots, v^{(n)}\right)$, we have $$
				\sum_{s \in{I \choose m-n}} \operatorname{Pf}\left(A_S^S\right) \sum_{\pi \in S_m} \operatorname{sgn}(\pi) \mathrm{GF}\left[\mathscr{P}_0\left(u^\pi, S^0 \oplus I\right)\right]=\operatorname{Pf}\left(\begin{array}{cc}
					H A H^{\top} & H J_N \\
					-J_N H^{\top} & \mathbf{0}
				\end{array}\right),
				$$ where $$
				J_N=\left(\begin{array}{cccc}
					0 & \ldots & 0 & 1 \\
					0 & \ldots & 1 & 0 \\
					\vdots & . & \vdots & \vdots \\
					1 & \ldots & 0 & 0
				\end{array}\right).
				$$

				Although the results of Stembridge and Ishikawa-Wakayama are both non-intersecting path explanations for Pfaffians, Stembridge derived the general generating function from $\mathbf{u}$ to $I$ by using the 1-vertex generating function \eqref{qi} and 2-vertex generating function \eqref{qij}, while Ishikawa-Wakayama derived the general generating function by the weight $h\left(u^{(i)}, V_j\right)$ between points in $\mathbf{u}$ and $I$, which contains more imformation between $\mathbf{u}$ and $I$. This is the reason why $D$-compatibility could be relaxed. Furthermore, Ishikawa-Wakayama's proof avoided internal pairings between points in $\mathbf{u}$, which made it easier to find a map to offset all intersecting paths. Since we don't want to make any constraints on the terminal interval $I$, we still adopt Stembridge's method in the following sections.
				
				\subsection{Application of Stembridge's theorem}	
				There have been numerous application of Stembridge's graphic explanation for Pfaffians. 
				In \cite{stembridge90,okada89}, it was used to count skew Young tableaux and the enumeration of plane partitions, and later it was used to demonstrate a minor summation formula for Pfaffian in \cite{ishiwaka05}. In recent years, a fusion of combinatorial technique and statistical physics leads to more applications of Stembridge's result. For example, Theorem \ref{stem1} was used in a vicious random walker problem without return \cite{forrester02}, and Theorem \ref{stem2} was used in a free-boundary lozenge tiling problem \cite[Section 4]{ciucu11} where some triangular holes in the lattices were imposed. In this part, we demonstrate how to introduce skew-orthogonal polynomials by considering non-intersecting Brownian motions and applications of Stembridge's results.
				
				Let's first consider $N$ independent simple random walks $X(t)=(X_1(t),\cdots, X_N(t))$ starting from $\vec{\alpha}=(\alpha_1,\cdots,\alpha_N)$ as in Section \ref{sec2}. If we consider a continuous-time non-intersecting Brownian motion in the configuration space $\mathbb{R}\times\mathbb{R}_{\geq 0}$, then it is known that at an arbitrary time $T$, the distribution of arrival points at $\vec{x}=(x_1,\cdots,x_N)\in D_N$ could be formulated by 
				\begin{align}\label{pdf}
					\rho_{T}(x_1,\cdots,x_N)=\frac{1}{Z_N}
					\det\left(p(\alpha_j,x_k;T)\right)_{j,k=1}^{N}
				\end{align}
				with a normalization factor $Z_N$.
				If $N=2n$ is even, then $Z_{2n}$ could be written by
				\begin{align*}
					Z_{2n}=\int_{x_1<\cdots<x_{2n}}\det\left(
					p(\alpha_j,x_k;T)
					\right)_{j,k=1}^Nd\vec{x}=\pf\left[
					Q_{\mathbb{R}} (\alpha_j,\alpha_k)
					\right]_{j,k=1}^{2n},
				\end{align*}
				where 
				\begin{align*}
					Q_\mathbb{R}(\alpha_j,\alpha_k)=\int_{\mathbb{R}^2}p(\alpha_j,x;T)\sgn(y-x)p(\alpha_k,y;T)dxdy.
				\end{align*}
				Moreover, if $N=2n+1$ is odd, then
				\begin{align*}
					Z_{2n+1}=\int_{x_1<\cdots<x_{2n+1}}\det\left(
					p(\alpha_j,x_k;T)
					\right)_{j,k=1}^{2N+1}d\vec{x}=\pf\left[\begin{array}{cc}
						Q_{\mathbb{R}}(\alpha_j,\alpha_k)&Q_{\mathbb{R}}(\alpha_j)\\
						Q_{\mathbb{R}}(\alpha_k)&0
					\end{array}
					\right]_{j,k=1}^{2n+1}
				\end{align*}
				with $Q_{\mathbb{R}}(\alpha_j)=\int_{\mathbb{R}}p(\alpha_j,x;T)dx$.
				It should be noted that the element in this Pfaffian coincide with the formula in the skew-symmetric inner product \eqref{sin}. Moreover, we have the following proposition.
				\begin{proposition}
					Let's define monic functions
					\begin{align*}
						P_{2n}(x)&=\frac{1}{Z_{2n}}\pf\left[\begin{array}{ccc}
							Q_{2n}&v_{2n+1}&\Psi_{2n}(x)\\
							-v_{2n+1}^\top&0&\psi_{2n+1}(x)\\
							-\Psi^\top_{2n}(x)&-\psi_{2n+1}(x)&0
						\end{array}
						\right],\\ P_{2n+1}(x)&=\frac{1}{Z_{2n}}\pf\left[\begin{array}{ccc}
							Q_{2n}&\Psi_{2n}(x)&v_{2n+2}\\
							-\Psi_{2n}^\top(x)&0&\psi_{2n+2}(x)\\
							-v_{2n+2}^\top&-\psi_{2n+2}(x)&0
						\end{array}
						\right],
					\end{align*}
					where $Q_{2n}=(Q_{\mathbb{R}}(\alpha_j,\alpha_k))_{j,k=1}^{2n}$, $\Psi_{2n}=(p(\alpha_1,x;T),\cdots,p(\alpha_{2n},x;T))^\top$, and $v_{2n+1}$ and $v_{2n+2}$ are two column vectors admitting the forms  $v_i=(Q_{\mathbb{R}}(\alpha_1,\alpha_{i}),\cdots,Q_{\mathbb{R}}(\alpha_{2n},\alpha_{i}))^\top$ for $i=2n+1,\,2n+2$. Moreover, $\{P_{k}(x)\}_{k\in\mathbb{N}}$ satisfy  skew orthogonality relations
					\begin{align*}
						&\int_{\mathbb{R}^2}P_{2n}(x)\sgn(y-x)P_{2m}(y)dxdy=\int_{\mathbb{R}^2}P_{2n+1}(x)\sgn(y-x)P_{2m+1}(y)dxdy=0,\\
						&\int_{\mathbb{R}^2}P_{2n}(x)\sgn(y-x)P_{2m+1}(y)dxdy=\frac{Z_{2n+2}}{Z_{2n}}\delta_{n,m}.
					\end{align*}
				\end{proposition}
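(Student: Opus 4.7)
The plan is to reduce the skew-orthogonality to elementary pairings $\langle \psi_i,\psi_j\rangle=Q_\mathbb{R}(\alpha_i,\alpha_j)$, where $\psi_i(x):=p(\alpha_i,x;T)$. First I would expand each defining Pfaffian along the row and column that carry the variable $x$. For $P_{2n}(x)$, expanding along the last row and column produces
\begin{equation*}
P_{2n}(x)=\frac{1}{Z_{2n}}\sum_{k=1}^{2n+1}(-1)^{k-1}c_k\,\psi_k(x),
\end{equation*}
where each $c_k$ is a sub-Pfaffian not depending on $x$; in particular $c_{2n+1}=\pf(Q_{2n})=Z_{2n}$, so the coefficient of $\psi_{2n+1}$ equals one. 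A parallel expansion shows that $P_{2n+1}(x)$ is a linear combination of $\psi_1,\ldots,\psi_{2n},\psi_{2n+2}$, again monic in its highest-index entry $\psi_{2n+2}$.

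Second, I would note that the functional $f(x)\mapsto \int_{\mathbb{R}^2}f(x)\,\sgn(y-x)\,\psi_j(y)\,dx\,dy$ commutes with the Pfaffian by linearity: it simply replaces every functional entry $\psi_i(x)$ inside the matrix by $Q_\mathbb{R}(\alpha_i,\alpha_j)$. Hence $\langle P_{2n},\psi_j\rangle=\pf(M^{(j)})/Z_{2n}$, where $M^{(j)}$ is the resulting skew-symmetric matrix. For $j\leq 2n+1$, using $Q_\mathbb{R}(\alpha_j,\alpha_j)=0$ one verifies that the last column of $M^{(j)}$ agrees with its $j$-th column (and, by skew-symmetry, the corresponding rows agree as well), so $\pf(M^{(j)})=0$ and $\langle P_{2n},\psi_j\rangle=0$. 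For $j=2n+2$ the matrix $M^{(j)}$ is precisely $Q_{2n+2}$, yielding $\langle P_{2n},\psi_{2n+2}\rangle=Z_{2n+2}/Z_{2n}$.

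Third, I would assemble the conclusion by bilinearity. For $m<n$, both $P_{2m}$ and $P_{2m+1}$ lie in the span of $\psi_1,\ldots,\psi_{2n+1}$, so the vanishing from the previous step immediately yields $\langle P_{2n},P_{2m}\rangle=\langle P_{2n},P_{2m+1}\rangle=0$; the cases $m>n$ follow from the skew-symmetry of the inner product together with the analogous identities for $P_{2m+1}$. In the diagonal case $m=n$, only the $\psi_{2n+2}$-component of $P_{2n+1}$ contributes, and with coefficient one, giving $\langle P_{2n},P_{2n+1}\rangle=Z_{2n+2}/Z_{2n}$. The hard part will be the counterpart identity $\langle P_{2n+1},\psi_j\rangle=0$ for $j\leq 2n$, because in the Pfaffian defining $P_{2n+1}$ the variable $x$ occupies the interior column $2n+1$ rather than the last column; there the two-columns-coincide argument first requires the combined elementary operation $C_j\mapsto C_j+C_{2n+1}$ together with the mirrored row operation (to preserve skew-symmetry) before two columns can be aligned and the Pfaffian read off as zero.
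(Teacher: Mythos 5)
Your strategy---expand the Pfaffian along the functional row/column, push the skew product inside by replacing each entry $\psi_i(x)$ with $Q_{\mathbb R}(\alpha_i,\alpha_j)$, and conclude by exhibiting two coinciding columns---is the standard argument for these bordered-Pfaffian formulas, and since the paper states this proposition without proof it is the natural route. It settles the even case completely: for $P_{2n}$ and $j\le 2n+1$ the $j$-th and last columns of your $M^{(j)}$ do coincide (using $Q_{\mathbb R}(\alpha_j,\alpha_j)=0$), for $j=2n+2$ one recovers $Q_{2n+2}$, and the normalization $\langle P_{2n},P_{2n+1}\rangle=Z_{2n+2}/Z_{2n}$ then follows from the monicity of $P_{2n+1}$ in $\psi_{2n+2}$ together with the orthogonality of $P_{2n}$ alone.

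The gap is exactly where you flag ``the hard part,'' and it is worse than hard: the step fails as described. For $P_{2n+1}$ with the matrix as displayed, after substituting $\psi_i(x)\mapsto Q_{\mathbb R}(\alpha_i,\alpha_j)$ the $j$-th and $(2n+1)$-st columns agree in rows $1,\dots,2n+1$ but are \emph{negatives} of each other in row $2n+2$ (the entry $-\psi_{2n+2}(x)$ becomes $-Q_{\mathbb R}(\alpha_{2n+2},\alpha_j)$ while the corresponding entry of column $j$ is $+Q_{\mathbb R}(\alpha_{2n+2},\alpha_j)$), so they are not equal, and the congruence $C_j\mapsto C_j+C_{2n+1}$, $R_j\mapsto R_j+R_{2n+1}$---while Pfaffian-preserving---cannot align any two columns. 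The obstruction is genuine: for $n=1$ the displayed formula gives $P_3=\tfrac{1}{Q_{12}}\left(Q_{12}\psi_4-Q_{24}\psi_1+Q_{14}\psi_2\right)$ with $Q_{ij}=Q_{\mathbb R}(\alpha_i,\alpha_j)$, whence $\langle P_3,\psi_1\rangle=-2Q_{14}$ and $\langle P_1,P_3\rangle=2Q_{24}\neq 0$. Your two-column argument does go through once the bordered structure is put in standard position, with $v_{2n+2}$ adjacent to $Q_{2n}$ and the functional column last, i.e.\ $\pf\left[\begin{smallmatrix}Q_{2n}&v_{2n+2}&\Psi_{2n}(x)\\ -v_{2n+2}^\top&0&\psi_{2n+2}(x)\\ -\Psi_{2n}^\top(x)&-\psi_{2n+2}(x)&0\end{smallmatrix}\right]$; note that this is \emph{not} obtained from the displayed matrix by the simultaneous swap of indices $2n+1\leftrightarrow 2n+2$ (that swap also flips the sign of the $(2n+1,2n+2)$ entry), so you must work with this rearranged form rather than defer the discrepancy to an elementary operation---for the formula exactly as displayed the deferred step cannot be completed.
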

				It should be remarked that there are only $Z_{2n}$ involved under the frame of skew-orthogonal polynomials. To have a clear understanding of $Z_{2n+1}$, partial-skew-orthogonal polynomials should be introduced and we have the following proposition.
				
				\begin{proposition}
					If we define monic functions
					\begin{align*}
						R_{2n}(x)&=\frac{1}{Z_{2n}}\pf\left[\begin{array}{cc}
							Q_{2n+1}&\Psi_{2n+1}(x)\\
							-\psi^\top_{2n+1}(x)&0
						\end{array}
						\right],\\R_{2n+1}(x)&=\frac{1}{Z_{2n+1}}\pf\left[\begin{array}{ccc}
							0&w_{2n+2}&0\\
							-w_{2n+2}^\top&Q_{2n+2}&\Psi_{2n+2}(x)\\
							0&-\Psi_{2n+2}(x)&0
						\end{array}
						\right],
					\end{align*}
					where $Q_{2n+2}=(Q_{\mathbb{R}}(\alpha_j,\alpha_k))_{j,k=1}^{2n+2}$, $\Psi_{2n+2}(x)=(p(\alpha_1,x;T),\cdots,p(\alpha_{2n+2},x;T))^\top$ and $w_{2n+2}=(Q_{\mathbb{R}}(\alpha_1),\cdots,Q_{\mathbb{R}}(\alpha_{2n+2}))$. Moreover, $\{R_k(x)\}_{k\in\mathbb{N}}$ are determined by the following relations
					\begin{align*}
						&\int_{\mathbb{R}^2} R_{2n}(x)\sgn(y-x)p(\alpha_m,x;T)dxdy=\frac{Z_{2n+2}}{Z_{2n}}\delta_{2n+1,m},\qquad\qquad \text{for $m\leq 2n+1$},\\
						&\int_{\mathbb{R}^2}R_{2n+1}(x)\sgn(y-x)p(\alpha_m,x;T)dxdy=-\frac{Z_{2n+2}}{Z_{2n+1}}Q_\mathbb{R}(\alpha_m),\qquad \text{for $m\leq 2n+1$}.
					\end{align*}
				\end{proposition}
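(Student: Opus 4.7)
The plan is to prove both identities by reducing each skew-symmetric double integral to a single Pfaffian via multilinearity, and then simplifying the resulting matrix with paired row/column operations that preserve both skew-symmetry and the Pfaffian.

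First, for $R_{2n}(x)$, I would expand the defining Pfaffian along its last column $\Psi_{2n+1}(x)$. Only the entries in that bordering row and column depend on $x$, so the multilinearity of the Pfaffian in those entries lets one push the double integral inside; each $p(\alpha_j, x; T)$ is then replaced by
\[
\int_{\mathbb{R}^2} p(\alpha_j, x; T)\sgn(y-x)p(\alpha_m, y; T)\,dxdy = Q_{\mathbb{R}}(\alpha_j, \alpha_m),
\]
by the very definition of $Q_{\mathbb{R}}(\alpha_j, \alpha_m)$. Folding the expansion back up yields
\[
\int_{\mathbb{R}^2} R_{2n}(x)\sgn(y-x)p(\alpha_m,y;T)\,dxdy = \frac{1}{Z_{2n}} \pf\left[\begin{array}{cc} Q_{2n+1} & q_m \\ -q_m^\top & 0 \end{array}\right],
\]
with $q_m = (Q_{\mathbb{R}}(\alpha_j, \alpha_m))_{j=1}^{2n+1}$. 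When $m$ lies in the index range of $Q_{2n+1}$, the vector $q_m$ duplicates the $m$-th column of $Q_{2n+1}$, and subtracting column $m$ from the last column together with the matching row operation annihilates the entire border, so the Pfaffian vanishes. In the remaining nonzero case (with the test index tied to $\alpha_{2n+2}$), the bordered matrix coincides with $Q_{2n+2}$, whose Pfaffian is $Z_{2n+2}$ by definition; dividing by $Z_{2n}$ then delivers the stated ratio.

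The same device handles $R_{2n+1}(x)$. Expanding its defining Pfaffian along the $\Psi_{2n+2}(x)$-border and integrating as above replaces each $p(\alpha_j, x; T)$ by $Q_{\mathbb{R}}(\alpha_j, \alpha_m)$, producing
\[
\int_{\mathbb{R}^2} R_{2n+1}(x)\sgn(y-x)p(\alpha_m,y;T)\,dxdy = \frac{1}{Z_{2n+1}} \pf\left[\begin{array}{ccc} 0 & w_{2n+2} & 0 \\ -w_{2n+2}^\top & Q_{2n+2} & q_m \\ 0 & -q_m^\top & 0 \end{array}\right],
\]
with $q_m$ now the $m$-th column of $Q_{2n+2}$. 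Subtracting column $m+1$ from the last column together with the matching row operation cancels every middle entry of the border, leaving a single nonzero entry $-Q_{\mathbb{R}}(\alpha_m)$ in the upper-right block, contributed by the $w_{2n+2}$ row. Expanding this reduced Pfaffian along the last column produces the prefactor $-Q_{\mathbb{R}}(\alpha_m)$ times $\pf(Q_{2n+2}) = Z_{2n+2}$, and the $Z_{2n+1}^{-1}$ normalization then gives $-Z_{2n+2}Q_{\mathbb{R}}(\alpha_m)/Z_{2n+1}$.

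The main technical obstacle is the careful sign bookkeeping during the paired row/column operations and the Pfaffian expansion along the last column. One must consistently apply the standard Pfaffian expansion formula and use the fact that adding a multiple of row $k$ to row $j$ together with the same column operation preserves both skew-symmetry and the Pfaffian. The delicate point in the $R_{2n+1}$ case is verifying that the row/column subtraction really leaves just one surviving border entry, which hinges on the $(j,j)$-diagonal entries of $Q_{2n+2}$ vanishing by skew-symmetry; once that is checked, both identities reduce to the single matrix identity $\pf(Q_{2n+2}) = Z_{2n+2}$.
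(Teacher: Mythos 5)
The paper states this proposition without proof, so there is no in-text argument to compare against; the route you take (replace the $x$-dependent border $\Psi(x)$ by its skew-pairing with $p(\alpha_m,\cdot\,;T)$, using linearity of the Pfaffian in the bordering row/column pair, then kill the resulting border by simultaneous row and column subtractions, which preserve the Pfaffian) is the natural one and is sound. The reduction of the even case to $\pf(Q_{2n+2})=Z_{2n+2}$ and the extraction of the single surviving entry $-Q_{\mathbb{R}}(\alpha_m)$ in the odd case both work exactly as you describe, and the only remaining work is the routine sign check you flag.

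There is, however, one point you gloss over that you must address explicitly: your own computation does not yield the proposition as literally written. For $R_{2n}$ you correctly observe that $q_m=(Q_{\mathbb{R}}(\alpha_j,\alpha_m))_{j=1}^{2n+1}$ duplicates the $m$-th column of $Q_{2n+1}$ for \emph{every} $m\le 2n+1$ --- including $m=2n+1$ --- so the pairing vanishes there as well, and the nonzero value $Z_{2n+2}/Z_{2n}$ is attained only at $m=2n+2$, where the bordered matrix is $Q_{2n+2}$. Hence what you actually prove is $\tfrac{Z_{2n+2}}{Z_{2n}}\delta_{2n+2,m}$ for $m\le 2n+2$, not $\tfrac{Z_{2n+2}}{Z_{2n}}\delta_{2n+1,m}$ for $m\le 2n+1$. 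A sanity check at $n=0$ confirms this: $R_0=p(\alpha_1,\cdot\,;T)$, so the left-hand side at $m=1$ is $Q_{\mathbb{R}}(\alpha_1,\alpha_1)=0$ by skew-symmetry, whereas $Z_2=Q_{\mathbb{R}}(\alpha_1,\alpha_2)$ is generically nonzero. The stated index is almost certainly an off-by-one slip inherited from the $0$-indexed monomial convention for partial-skew-orthogonal polynomials (where the normalization sits at $x^{2n+1}$, the $(2n+2)$-nd basis element); your proof establishes the corrected, $1$-indexed version, but you should say so rather than silently substituting it. Likewise you should record that you are reading $p(\alpha_m,x;T)$ in the integrand as $p(\alpha_m,y;T)$, since as printed the $y$-integration of $\sgn(y-x)$ alone diverges. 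With those two emendations made explicit, the argument is complete up to the acknowledged sign bookkeeping.
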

				
				\section{Non-intersecting path explanations for block Pfaffians and applications}\label{sec4}
				
				In this section, we generalize Stembridge's theorem and obtain a block matrix representation for generating function of non-intersecting paths. We assume that these paths should start from and end at several different sets of vertices. 
				
				\subsection{Pfaffian representations for $J\oplus \mathbf{u}\to \mathbf{v}\oplus I$}
				To give a Pfaffian representation for paths $J\oplus \mathbf{u}\to\mathbf{v}\oplus I$, we need to introduce the separation of a graph.		
				
				\begin{definition}[$D$-separated] We call vertex sets $J\oplus\mathbf{u}$ and $\mathbf{v}\oplus I$ to be $D$-separated if paths from $J$ to $\mathbf{v}$ and those from $\mathbf{u}$ to $I$ are non-intersecting.  
				\end{definition}	
				
				\begin{definition}[$\mathscr{P}_0(J \oplus \mathbf{u} ; \mathbf{v} \oplus I)$]
					If a graph $D$ is $D$-separated, then $\mathscr{P}_0(J \oplus \mathbf{u} ; \mathbf{v} \oplus I)$ represents all non-intersecting paths $J\oplus \mathbf{u}\to\mathbf{v}\oplus I$ without any path from $J\to I$. Moreover, paths should be consecutive; that is, if there is a path from $u^{(1)}\to v^{(k)}$, then we must have $u^{(2)}\to v^{(k+1)}$ and so on, until that $v^{(s)}$ is matched over. 
				\end{definition}
				
				Again, we assume that $\mathbf{u}=(u^{(1)},\cdots,u^{(r)})$ and $\mathbf{v}=(v^{(1)},\cdots,v^{(s)})$. If $r+s$ is even, then we use ${}_e^{e}\mathscr{P}_0(J\oplus \mathbf{u};\mathbf{v}\oplus I)$ (respectively ${}_o^{o}\mathscr{P}_0(J\oplus \mathbf{u};\mathbf{v}\oplus I)$ ) to denote the number of  paths from $J\to\mathbf{v}$ and $\mathbf{u}\to I$ are both even (respectively both odd). Here we don't have the case where the number of paths from $J\to \mathbf{v}$ is even, and that from $\mathbf{u} \to I$ is odd. Otherwise there will be some paths from $J\to I$, which is contradicted with $D$-separated . 
				On the other hand, if $r+s$ is odd, then we use  ${}_e^{o}\mathscr{P}_0(J\oplus \mathbf{u};\mathbf{v}\oplus I)$ (respectively ${}_o^{e}\mathscr{P}_0(J\oplus \mathbf{u};\mathbf{v}\oplus I)$) to represent that there are even (respectively odd) paths from $\mathbf{u}\to I$ and odd (respectively even) paths from $J\to\mathbf{v}$. Let's denote weight matrices
				\begin{align*}
					&Q=\left(
					Q_I(u^{(i)},u^{(j)})
					\right)_{1\leq i,j\leq r},\quad \tilde{Q}=\left(
					\tilde{Q}_J(v^{(s+1-i)},v^{(s+1-j)})
					\right)_{1\leq i,j\leq s},\\
					& H=\left(
					h(u^{(i)},v^{(s+1-j)})
					\right)_{1\leq i\leq r,1\leq j\leq s},
				\end{align*}
				and column vectors $Q_I=\left(
				Q_I(u^{(i)})
				\right)_{1\leq i\leq r}$
				and $\tilde{Q}_J=(\tilde{Q}_J(v^{(j)}))_{1\leq j\leq s}$.
				
				\begin{theorem}\label{thm4.3}
					Let $\mathbf{u}=\left(u^{(1)}, \ldots, u^{(r)}\right)$ and $\mathbf{v}=\left(v^{(1)}, \ldots, v^{(s)}\right)$ be sequences of vertices in an acyclic directed graph $D$. Assume that $I$ and $J$ are totally ordered subsets of $V$, such that $J \oplus \mathbf{u}$ and $\mathbf{v} \oplus I$ are $D$-compatible and $D$-separated. If $r+s$ is even, then we have
					\begin{align}\label{pf1}
						\gf[{}_e^{e}\mathscr{P}_0(J\oplus \mathbf{u};\mathbf{v}\oplus I)]=\pf\left[
						\begin{array}{cc}
							Q&H\\
							-H^\top&\tilde{Q}
						\end{array}
						\right],
					\end{align}	
					and 
					\begin{align}\label{pf4.2}
						\gf[{}_o^{o}\mathscr{P}_0(J\oplus \mathbf{u};\mathbf{v}\oplus I)]=\operatorname{Pf}\left[\begin{array}{cccc}
							Q & Q_I & H & 0 \\
							-Q_I^{\top} & 0 & 0 & 0 \\
							-H^{\top} & 0 & \tilde{Q} & \tilde{Q}_J \\
							0 & 0 & -\tilde{Q}_J^{\top} & 0
						\end{array}\right].
					\end{align}	
					On the other hand, if $r+s$ is odd, then
					\begin{align}\label{pf3}
						\gf[{}_o^{e}\mathscr{P}_0(J\oplus \mathbf{u};\mathbf{v}\oplus I)]=\operatorname{Pf}\left[\begin{array}{ccc}
							Q & H & 0 \\
							-H^{\top} & \tilde{Q} & \tilde{Q}_J \\
							0 & -\tilde{Q}_J^{\top} & 0
						\end{array}\right],
					\end{align}	
					and
					\begin{align}\label{pf4}
						\gf[{}_e^{o}\mathscr{P}_0(J\oplus \mathbf{u};\mathbf{v}\oplus I)]=\operatorname{Pf}\left[\begin{array}{ccc}
							Q & Q_I & H \\
							-Q_I^{\top} & \tilde{Q} & 0 \\
							-H^{\top} & 0 & 0
						\end{array}\right].
					\end{align}	
				\end{theorem}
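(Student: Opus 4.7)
My strategy is to mimic Stembridge's proof of Theorem \ref{stem1} by expanding each right-hand-side Pfaffian via the $1$-factor definition \eqref{expansionformula} and then applying a path-switching involution, now carried out simultaneously on both sides of the graph. I would first prove the ``even--even'' case \eqref{pf1} directly, and afterwards deduce the remaining three formulas \eqref{pf4.2}, \eqref{pf3}, \eqref{pf4} by the auxiliary-vertex trick already used above in the proof of Theorem \ref{stem2}.

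For \eqref{pf1}, expand $\pf\bigl[\begin{smallmatrix} Q & H \\ -H^\top & \tilde{Q}\end{smallmatrix}\bigr]$ as a signed sum over $1$-factors $\pi$ on the ordered set $\{u^{(1)},\ldots,u^{(r)},v^{(s)},\ldots,v^{(1)}\}$; then further expand each matrix entry into a path sum --- a $Q$-entry via \eqref{qij}, a $\tilde{Q}$-entry via its analogue from the remark following Theorem \ref{stem1}, and an $H$-entry as a single-path sum over $\Mp(u^{(i)},v^{(s+1-j)})$. This rewrites the Pfaffian as a signed sum over ``super-configurations'' --- tuples of paths whose source and target types are dictated by $\pi$. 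I would then define a Stembridge-style involution on the intersecting super-configurations: at the lexicographically first intersection, swap the tails of the two paths meeting there. The $D$-separated hypothesis is precisely what guarantees this swap never manufactures a forbidden $J\to I$ path and so keeps the involution inside the domain; the standard crossing-number bookkeeping then shows it flips the overall sign. After cancellation, only non-intersecting super-configurations survive, and by $D$-compatibility these automatically satisfy the ``consecutive'' matching built into the definition of $\mathscr{P}_0(J\oplus\mathbf{u};\mathbf{v}\oplus I)$. The ${}_e^e$ parity is forced because, in the absence of auxiliary rows or columns, pairings among the $u$-indices (which feed $I$ in pairs) must come in even numbers, and symmetrically on the $\mathbf{v}$ side.

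For the three augmented formulas I would add at most one auxiliary $u$-vertex placed after every vertex of $I$ and/or one auxiliary $v$-vertex placed before every vertex of $J$, each having zero weight to every other vertex, exactly as in the proof of Theorem \ref{stem2} reproduced in the excerpt. Then the $Q_I$ column and the $\tilde{Q}_J$ row appear naturally from these auxiliary vertices, and the zero blocks at their mutual intersections enforce the required ${}_o^o$, ${}_o^e$, or ${}_e^o$ parity. Applying \eqref{pf1} inside the enlarged graph (where it extends verbatim, since the hypotheses are preserved) then yields each of \eqref{pf4.2}, \eqref{pf3}, \eqref{pf4}.

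The main obstacle I anticipate is the sign bookkeeping inside the core involution: the crossing number $\operatorname{cs}(\pi)$ must be matched against the $\sgn$ factors appearing in \eqref{qij} and its $\tilde{Q}_J$-analogue while respecting the reversal convention in which the $v$-indices enter $\tilde{Q}$ and $H$ in reverse order. I expect this to require an auxiliary sign lemma paralleling the one Stembridge establishes en route to Theorem \ref{stem1}, now generalized to track simultaneous reversals on the $\mathbf{v}\oplus I$ side. Once that lemma is in place, both the involution and the auxiliary-vertex reductions should be routine.
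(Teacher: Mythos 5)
Your proposal matches the paper's argument for \eqref{pf1} essentially step for step: the same $1$-factor expansion of the block Pfaffian into path configurations, the same tail-swapping sign-reversing involution at the first intersection point (with $D$-separation confining intersections to one side of the graph and $D$-compatibility plus Stembridge's sign lemma giving the sign flip), and the same parity observation that leftover $u$- and $v$-vertices pair internally and hence in even numbers. The only divergence is minor: for \eqref{pf4.2}--\eqref{pf4} the paper simply asserts that the same involution argument goes through, whereas you reduce them to \eqref{pf1} via the auxiliary-vertex device from the proof of Theorem \ref{stem2} --- an equally valid route that the paper itself employs elsewhere.
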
		
				\begin{proof}
					Let's prove the equation \eqref{pf1}, and \eqref{pf4.2}-\eqref{pf4} could be similarly verified. This proof is based on \emph{the path switching involution for non-intersecting  paths}, and is divided into three steps.
					
					\noindent
					(1) {\bf Expanding the Pfaffian as a summation of configurations. } 
					By expanding \eqref{pf1}, we have
					\begin{align*}
						\pf\left[\begin{array}{cc}
							Q&H\\-H^\top&\tilde{Q}
						\end{array}
						\right]&=\sum_{\pi} \text{sgn}(\pi)\times\\
						&\quad\prod_{(u^{(i)},u^{(j)})\in\pi} Q_I(u^{(i)},u^{(j)})\prod_{(u^{(i)},v^{(j)})\in\pi}h(u^{(i)},v^{(j)})\prod_{(v^{(i)},v^{(j)})\in\pi} \tilde{Q}_J(v^{(j)},v^{(i)}),
					\end{align*}
					where $\pi$ belongs to the set $\mathscr{F}(u^{(1)},\cdots,u^{(r)},v^{(s)},\cdots,v^{(1)})$. We recognize this formula as a product of path weights by rewriting it as 
					\begin{align}\label{sum}
						\pf\left[\begin{array}{cc}
							Q&H\\-H^\top&\tilde{Q}
						\end{array}
						\right]=\sum_{(\pi,P^{(1)},\cdots, P^{(r)},Q^{(1)},\cdots,Q^{(s)})}\text{sgn}(\pi) \omega(P^{(1)},\cdots,P^{(r)},Q^{(1)},\cdots,Q^{(s)}),
					\end{align}
					where ${(\pi,P^{(1)},\cdots, P^{(r)},Q^{(1)},\cdots,Q^{(s)})}$ is a configuration such that

					\begin{itemize}
						\item If $(u^{(i)},v^{(j)})\in \pi$, then $P^{(i)}=Q^{(j)}\in\mathscr{P}(u^{(i)},v^{(j)})$;
						\item If $(u^{(i)},u^{(j)})\in\pi$, then $P^{(i)}\in \mathscr{P}(u^{(i)},I)$, $P^{(j)}\in\mathscr{P}(u^{(j)},I)$, while $P^{(i)}$ and $P^{(j)}$ don't intersect;
						\item If $(v^{(i)},v^{(j)})\in\pi$, then $Q^{(i)}\in \mathscr{P}(J, v^{(i)})$, $Q^{(j)}\in\mathscr{P}(J,v^{(i)})$, while $Q^{(i)}$ and $Q^{(j)}$ don't intersect.
						
					\end{itemize}
					We call these three conditions as the configuration conditions.

					\noindent 
					(2) {\bf Finding a sign-reversing involution to offset the intersecting paths.}
					To make the RHS in \eqref{sum} represent the summation of weights for all non-intersecting paths, we intend to show that there exists an involution which can offset the intersecting term. Since the graph is $D$-separated, we know that if there are paths intersected, then the intersecting point belongs to $P^{(i)}\cap P^{(j)}$ or $Q^{(i)}\cap Q^{(j)}$ for some $i$ and $j$. We show the first case and the latter could be similarly verified. Let's find the smallest intersecting point $w^*$\footnote{We call an intersecting point the smallest if it is the first intersecting point starting from $u^{(1)}$. If there isn't any intersecting point in $P^{(1)}$, then we find it in $P^{(2)}$, and so on.}(the biggest intersecting point $w^*$ in the latter case). Among the paths passing through $w^*$, we assume that $P^{(i)}$ and $P^{(j)}$ are the two paths which admit two smallest indices.  Thus we could construct an involution
					\begin{align*}
						\phi: (\pi,P^{(1)},\cdots,P^{(r)},Q^{(1)},\cdots,Q^{(s)})\to (\bar{\pi},\bar{P}^{(1)},\cdots,\bar{P}^{(r)},\bar{Q}^{(1)},\cdots,\bar{Q}^{(s)}),
					\end{align*}
					where we define paths $\bar{P}^{(i)}$ and $\bar{P}^{(j)}$ as
					\begin{align*}
						\bar{P}^{(i)}=P^{(i)}(\to w^*)P^{(j)}(w^*\to),\quad \bar{P}^{(j)}=P^{(j)}(\to w^*)P^{(i)}(w^*\to),
					\end{align*}
					and the other paths remain the same. At the same time, $\bar{\pi}$ is obtained by interchanging $u^{(i)}$ and $u^{(j)}$ in $\pi$. Next, we show that $(\bar{\pi},\bar{P}^{(1)},\cdots,\bar{P}^{(r)},\bar{Q}^{(1)},\cdots,\bar{Q}^{(s)})$ is still a configuration, and $\phi$ is sign-reversing. 
					
					To prove this, we first demonstrate that $\left(\bar{\pi}, \bar{P}^{(1)}, \ldots, \bar{P}^{(r)}, \bar{Q}^{(1)}, \ldots, \bar{Q}^{(s)}\right)$ is still in the summation \eqref{sum}, which is equivalent to verify that $\left(\bar{\pi}, \bar{P}^{(1)}, \ldots, \bar{P}^{(r)}, \bar{Q}^{(1)}, \ldots, \bar{Q}^{(s)}\right)$ satisfies configuration conditions. Since the paths except $P^{(i)}$ and $P^{(j)}$ remain the same, it is necessary to consider cases in $\bar{\pi}$ which only contain $u^{(i)}$ or $u^{(j)}$.
					If $\left(u^{(i)}, u^{(k)}\right) \in \bar{\pi}$, then we have $\left(u^{(j)}, u^{(k)}\right) \in \pi$, implying that $P^{(j)} \in \mathscr{P}\left(u^{(j)} ; I\right), P^{(k)} \in \mathscr{P}\left(u^{(k)} ; I\right)$. Therefore, as $\bar{\pi}$ is obtained by interchanging $u^{(i)}$ and $u^{(j)}$ in $\pi$, it is known that $\bar{P}^{(i)} \in \mathscr{P}\left(u^{(i)}; I\right)$ and $\bar{P}^{(k)} \in \mathscr{P}\left(u^{(k)} ; I\right)$ as required. To show that $\bar{P}^{(i)}$ and $\bar{P}^{(k)}$ do not intersect, we verify it by contradiction. Assume that $\bar{P}^{(i)}$ and $\bar{P}^{(k)}$ intersect with each other at some point, then the intersection point could only be behind the smallest $w^*$. However, since $\bar{P}^{(i)}$ is the same as $P^{(j)}$ after the point $w^*$, it means that $P^{(j)}$ and $P^{(k)}$  intersect, which leads to a contradiction with $(u^{(j)},u^{(k)})\in\pi$.
					On the other hand, if $\left(u^{(i)}, v^{(k)}\right) \in \bar{\pi}$, then $\left(u^{(j)}, v^{(k)}\right) \in \pi$, and therefore $P^{(j)} \in \mathscr{P}\left(u^{(j)}, v^{(k)}\right)$. From this fact, we know that $\bar{P}^{(i)} \in \mathscr{P}\left(u^{(i)}, v^{(k)}\right)$ as required. To show the map is sign-reversing, by making use of $D$-compatibility, we know that for any $k$ satisfying $i<k<j$, $P^{(k)}$ must intersect with $P^{(i)}$ and $P^{(j)}$. Therefore, by using \cite[Lem. 2.1]{stembridge90}, we can deduce that this involution mapping is sign-reversing.
					
					\noindent
					(3) {\bf Simplification of the configuration.} After offsetting intersecting terms, only non-intersecting terms are left. Therefore, \eqref{sum} could be rewritten as the non-intersecting part of
					\begin{align}\label{simplified}
						\sum_{\text{$\ell$ is odd}}\sum_{\pi \in \Lambda_\ell}\text{sgn}(\pi)\prod_{\left(u^{(i)}, u^{(j)}\right) \in \pi_2^{\ell}} Q_I(u^{(i)},u^{(j)}) \prod_{i=\ell}^{s} h(u^{(i+1-\ell)},v^{(i)})\prod_{ \left(v^{(i)}, v^{(j)}\right) \in \pi_3^{\ell}} \tilde{Q}_J(v^{(j)},v^{(i)}),
					\end{align}
					where 
					\begin{align*}
						&\Lambda_\ell=\{
						\pi\in\mathscr{F}(u^{(1)},\cdots,u^{(r)},v^{(s)},\cdots,v^{(1)})|\pi=\pi_1^\ell\cup\pi_2^\ell\cup\pi_3^\ell
						\},\\
						&\pi_1^\ell=\cup_{i=\ell}^s (u^{(i+1-\ell)},v^{(i)}), \quad\pi_2^\ell\in\mathscr{F}(u^{(s+2-\ell)},\cdots,u^{(r)}),\quad \pi_3^\ell\in\mathscr{F}(v^{(1)},\cdots,v^{(\ell-1)}).
					\end{align*}
					\begin{figure}[H]
						\centering
						\includegraphics[width=10cm]{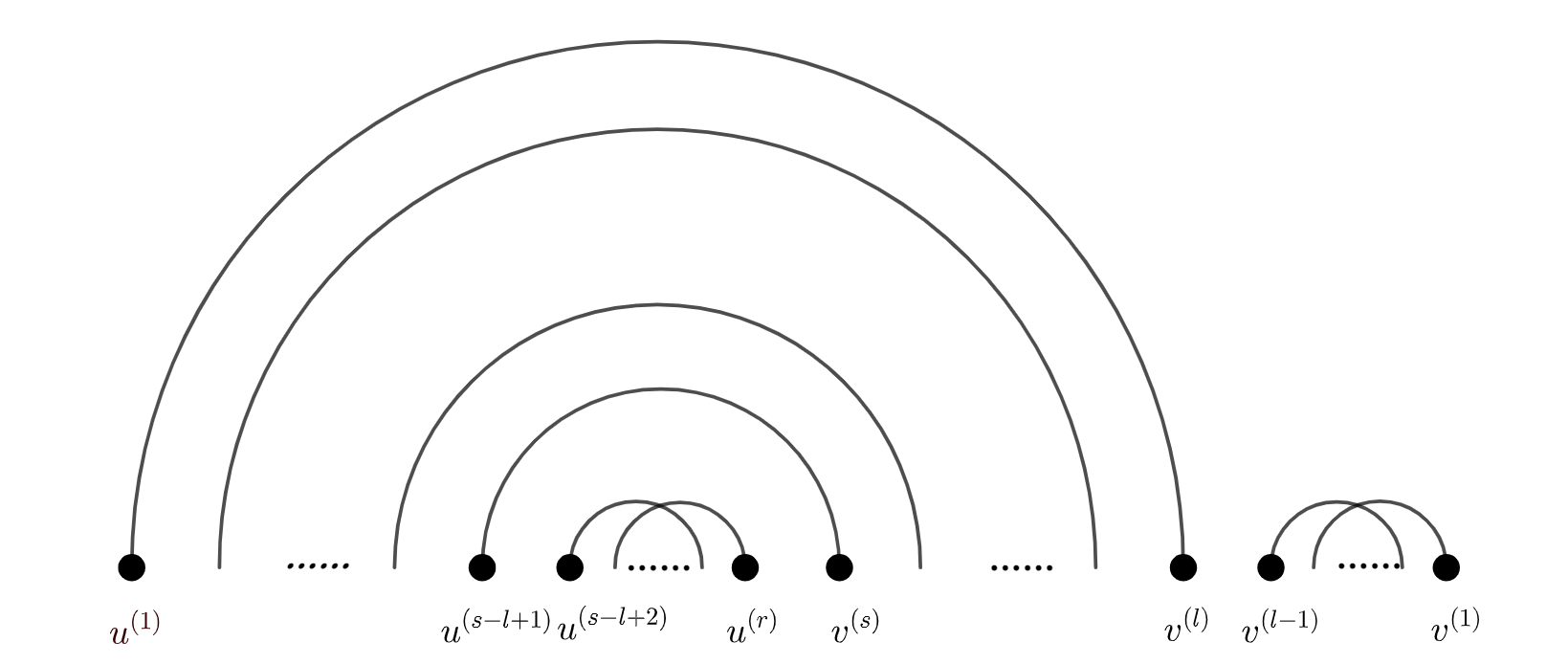}
						\caption{A decomposition of 1-factors for pairings in $J\oplus \mathbf{u}\to \mathbf{v}\oplus I$} 
					\end{figure}
					
					Moreover, by realizing
					\begin{align*}
						\sum_{\pi \in \Lambda_l} \operatorname{sgn}(\pi)=\operatorname{sgn}\left(\pi_1^{\ell}\right)\left(\sum \operatorname{sgn}\left(\pi_2^{\ell}\right)\right)\left(\sum \operatorname{sgn}\left(\pi_3^{\ell}\right)\right)=1
					\end{align*}
					we know that \eqref{simplified} represents the weight of all non-intersecting paths from $J\oplus \mathbf{u}\to \mathbf{v}\oplus I$, and there are even paths from $\mathbf{u}\to I$ and even paths from $J\to\mathbf{v}$.
				\end{proof}	
				
				Let's end with an example. If $\mau=(u^{(1)},u^{(2)})$ and $\mav=(v^{(1)},v^{(2)})$, then it is known that 
				\begin{align*}
					&\pf\left[\begin{array}{cccc}
						0&Q_I(u^{(1)},u^{(2)})&h(u^{(1)},v^{(2)})&h(u^{(1)},v^{(1)})\\
						-Q_I(u^{(1)},u^{(2)})&0&h(u^{(2)},v^{(2)})&h(u^{(2)},v^{(1)})\\
						-h(u^{(1)},v^{(2)})&-h(u^{(2)},v^{(2)})&0&\tilde{Q}_J(v^{(2)},v^{(1)})\\
						-h(u^{(1)},v^{(1)})&-h(u^{(2)},v^{(1)})&-\tilde{Q}_J(v^{(2)},v^{(1)})&0
					\end{array}
					\right]\\
					&\qquad =Q_I(u^{(1)},u^{(2)})\tilde{Q}_J(v^{(2)},v^{(1)})-h(u^{(1)},v^{(2)})h(u^{(2)},v^{(1)})+h(u^{(1)},v^{(1)})h(u^{(2)},v^{(2)}),
				\end{align*}
				where the first term represents the weight of non-intersecting paths from $\mau\to I$ and $J\to \mav$, and the rest terms represent the weight of non-intersecting paths from $u^{(1)}\to v^{(1)}$ and $u^{(2)}\to v^{(2)}$.
				
				\subsection{Applications of block Pfaffians and multiple skew-orthogonal polynomials}
				In this subsection, we show some applications of this block Pfaffians, which correspond to the multiple skew-orthogonal polynomial theory.
				
				We start with the probability density function given by \eqref{pdf}. If we consider there are only two starting points named $a_1$ and $a_2$, and there are $n_i$ paths from $\alpha_i$ for $i=1,2$, then we have a confluent form for the PDF. By noting that
				\begin{align*}
					\lim_{\alpha_1,\cdots,\alpha_{n_1}\to a_1\atop
						\alpha_{n_1+1},\cdots,\alpha_N\to a_2}\det(p(\alpha_i,x_j;T))_{i,j=1}^{N}\sim\det\left(\begin{array}{c}
						x_j^{k_1-1}p(a_1,x_j;T)\\
						x_j^{k_2-1}p(a_2,x_j;T)
					\end{array}\right)_{j=1,\cdots,N,k_1=1,\cdots,n_1,k_2=1,\cdots,n_2},
				\end{align*}	
				where $N=n_1+n_2$,				
				we know that the normalization factor in this case could be written as
				\begin{align}\label{zn1n2}
					\begin{aligned}
						Z_{n_1,n_2}&=\int_{x_1<\cdots<x_N} \det\left(\begin{array}{c}
							x_j^{k_1-1}p(a_1,x_j;T)\\
							x_j^{k_2-1}p(a_2,x_j;T)
						\end{array}\right)_{j=1,\cdots,N,k_1=1,\cdots,n_1,k_2=1,\cdots,n_2} d\vec{x}\\
						&=\pf\left[\begin{array}{cc}
							Q_{\mathbb{R}^2}(u^{(i)},u^{(j)})_{i,j=1,\cdots,n_1}&H(u^{(i)},v^{(j)})_{i=1,\cdots,n_1,\atop
								j=1,\cdots,n_2}\\
							-H(v^{(i)},u^{(j)})_{i=1,\cdots,n_2\atop j=1,\cdots,n_1}&\tilde{Q}_{\mathbb{R}^2}(v^{(i)},v^{(j)})_{i,j=1,\cdots,n_2}
						\end{array}
						\right]
					\end{aligned}
				\end{align}				
				if $n_1+n_2$ is even. Here we note that the partition function $Z_{n_1,n_2}$ is in the form of equation \eqref{pf1}. Moreover, in the above notations, we have
				\begin{align*}
					Q_{\mathbb{R}^2}(u^{(i)},u^{(j)})&=\int_{\mathbb{R}^2} x^{i-1}p(a_1,x;T)\sgn(y-x)y^{j-1} p(a_1,y;T)dxdy,\\
					\tilde{Q}_{\mathbb{R}^2}(v^{(i)},v^{(j)})&=\int_{\mathbb{R}^2}x^{i-1}p(a_2,x;T)\sgn(y-x)y^{j-1}p(a_2,y;T)dxdy,\\
					H(u^{(i)},v^{(j)})&=\int_{\mathbb{R}^2} x^{i-1} p(a_1,x;T)\sgn(y-x)y^{j-1} p(a_2,y;T)dxdy,\\
					H(v^{(i)},u^{(j)})&=\int_{\mathbb{R}^2} x^{i-1} p(a_2,x;T)\sgn(y-x)y^{j-1} p(a_1,y;T)dxdy.
				\end{align*}	
				This normalization factor induces the following definition of multiple skew-orthogonal functions.
				\begin{proposition}
					Let's denote
					$m_{i,j}=Q_{\mathbb{R}^2}(u^{(i)},u^{(j)})$, $\tilde{m}_{i,j}=\tilde{Q}_{\mathbb{R}^2}(v^{(i)},v^{(j)})$, $h_{i,j}=H(u^{(i)},v^{(j)})$ and $\psi_{i,j}(x)=x^{j-1}p(a_i,x;T)$. If $n_1+n_2$ is odd, then we could define functions
					\begin{align*}
						&R_{n_1,n_2}(x)=\pf\left[\begin{array}{ccc}
							\left[
							m_{i,j}
							\right]_{i,j=1}^{n_1}&\left[
							h_{i,j}
							\right]_{i=1,\cdots,n_1 \atop j=1,\cdots,n_2}&
							\left[
							\psi_{1,j}(x)
							\right]_{j=1}^{n_1}
							\\
							-\left[
							h_{i,j}
							\right]_{i=1,\cdots,n_2\atop j=1,\cdots,n_1}&
							\left[
							\tilde{m}_{i,j}
							\right]_{i,j=1}^{n_2}
							&
							\left[
							\psi_{2,j}(x)
							\right]_{j=1}^{n_2}
							\\
							-\left[
							\psi_{1,j}(x)
							\right]_{j=1}^{n_1}&-\left[
							\psi_{2,j}(x)
							\right]_{j=1}^{n_2}&0
						\end{array}
						\right],\\
						&R_{n_1+1,n_2}(x)=\pf\left[\begin{array}{ccc}
							\left[m_{i,j}\right]_{i,j=1,\cdots,n_1-1,n_1+1}
							&\left[
							h_{i,j}
							\right]_{i=1,\cdots,n_1-1,n_1+1\atop j=1,\cdots,n_2}&\left[
							\psi_{1,j}(x)
							\right]_{j=1,\cdots,n_1-1,n_1+1}\\
							-[h_{i,j}]_{i=1,\cdots,n_2\atop j=1,\cdots,n_1-1,n_1+1}&\left[\tilde{m}_{i,j}\right]_{i,j=1}^{n_2}&\left[
							\psi_{2,j}(x)
							\right]_{j=1}^{n_2}\\
							-\left[
							\psi_{1,j}(x)
							\right]_{j=1,\cdots,n_1-1,n_1+1}&-\left[\psi_{2,j}(x)\right]_{j=1}^{n_2}&0
						\end{array}
						\right],\\
						&R_{n_1,n_2+1}(x)=\pf\left[\begin{array}{ccc}
							\left[
							m_{i,j}
							\right]_{i,j=1}^{n_1}&\left[
							h_{i,j}
							\right]_{i=1,\cdots,n_1,\atop j=1,\cdots,n_2-1,n_2+1}&\left[
							\psi_{1,j}(x)
							\right]_{j=1}^{n_1}\\
							-\left[
							h_{i,j}
							\right]_{i=1,\cdots,n_2-1,n_2+1\atop j=1,\cdots,n_1}&\left[
							\tilde{m}_{i,j}
							\right]_{i,j=1,\cdots,n_2-1,n_2+1}&\left[
							\psi_{2,j}(x)
							\right]_{j=1,\cdots,n_2-1,n_2+1}
							\\
							-\left[
							\psi_{1,j}(x)
							\right]_{j=1}^{n_1}&-\left[
							\psi_{2,j}(x)
							\right]_{j=1,\cdots,n_2-1,n_2+1}&0
						\end{array}
						\right],
					\end{align*}
					and they satisfy the following multiple skew orthogonality \footnote{Here $\vec{n}=(n_1,n_2)$ and $e_i$ is the unit vector whose $i$th element is $1$, and the others are $0$. For example, $\vec{n}+e_1=(n_1+1,n_2)$.}
					\begin{align*}
						&\int_{\mathbb{R}^2} R_{n_1,n_2}(x)\sgn(y-x)y^{j-1}p(y,a_i;T)dxdy=Z_{\vec{n}+e_i}\delta_{j,n_i+1},&\quad j=1,\cdots,n_i+1,\quad i=1,2,\\
						&\int_{\mathbb{R}^2} R_{\vec{n}+e_i}(x)\sgn(y-x)y^{j-1}p(y,a_i;T)dxdy=Z_{\vec{n}+e_i}\delta_{j,n_i},&\quad j=1,\cdots,n_i+1,\quad i=1,2,
					\end{align*}	
					where $Z_{n_1,n_2}$ is given by \eqref{zn1n2}.								
				\end{proposition}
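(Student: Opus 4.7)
The plan is to verify each orthogonality by bringing the integration inside the Pfaffian using its multilinearity in the row and column that contain $x$; the resulting Pfaffian is then either degenerate (hence zero) or a row/column permutation of the matrix defining $Z_{\vec{n}+e_i}$.

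First I would record the key identifications. Writing $\psi_{i,j}(x)=x^{j-1}p(a_i,x;T)$ and exploiting the symmetry of the Brownian kernel, one has $y^{j-1}p(y,a_i;T)=\psi_{i,j}(y)$, so that
\begin{align*}
m_{r,s} &= \int_{\mathbb{R}^2} \psi_{1,r}(x)\,\sgn(y-x)\,\psi_{1,s}(y)\,dx\,dy,\\
\tilde m_{r,s} &= \int_{\mathbb{R}^2} \psi_{2,r}(x)\,\sgn(y-x)\,\psi_{2,s}(y)\,dx\,dy,\\
h_{r,s} &= \int_{\mathbb{R}^2} \psi_{1,r}(x)\,\sgn(y-x)\,\psi_{2,s}(y)\,dx\,dy,
\end{align*}
while a relabeling $x\leftrightarrow y$ gives $\int \psi_{2,r}(x)\,\sgn(y-x)\,\psi_{1,s}(y)\,dx\,dy=-h_{s,r}$. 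Consequently every inner product of a $\psi$-entry of the $x$-column with a test function $\psi_{i,j}$ is already an entry of (an extension of) the block matrix defining $Z_{\vec{n}+e_i}$ in \eqref{zn1n2}.

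Next I would apply multilinearity in the last row and column. For any of the three $R$-polynomials, this yields
\[
\int_{\mathbb{R}^2} R(x)\,\sgn(y-x)\,\psi_{i,j}(y)\,dx\,dy = \pf\begin{pmatrix} M & v_{i,j} \\ -v_{i,j}^{\top} & 0 \end{pmatrix},
\]
where $M$ is the skew-symmetric top-left block of the $R$ in question and $v_{i,j}$ is obtained by applying the above identifications to each entry of the $x$-column. If the pair $(i,j)$ labels a $\psi$ already present in the $x$-column of $R$, then $v_{i,j}$ coincides with a column already present in $M$; the augmented matrix then has two equal columns and its Pfaffian vanishes. This accounts for all the $\delta$-zero cases, and crucially explains the skipped-index phenomenon: in $R_{n_1+1,n_2}$ the block $M'$ uses first-component indices $1,\dots,n_1-1,n_1+1$, so among $j\in\{1,\dots,n_1+1\}$ the only missing value is $j=n_1$, and analogously $j=n_2$ for $R_{n_1,n_2+1}$. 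In the unique surviving case, $v_{i,j}$ is genuinely new, and a simultaneous permutation of rows and columns inserting it into the correct slot turns the augmented matrix into the matrix defining $Z_{\vec{n}+e_i}$.

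The main obstacle is sign bookkeeping: one must combine the signs coming from the Pfaffian expansion in the last row and column, from the simultaneous row/column permutation realigning the augmented matrix with $Z_{\vec{n}+e_i}$, and from the skipped index in $R_{\vec{n}+e_1}$ and $R_{\vec{n}+e_2}$, so that the final normalization equals $+Z_{\vec{n}+e_i}$ rather than $\pm Z_{\vec{n}+e_i}$. These signs are precisely what is absorbed into the Pfaffian conventions chosen for the three $R$-polynomials. Once this bookkeeping is done, the result follows from the elementary fact that a skew-symmetric matrix with two identical columns has vanishing Pfaffian, together with the block-Pfaffian expression \eqref{zn1n2} for $Z_{\vec{n}+e_i}$ inherited from Theorem \ref{thm4.3}.
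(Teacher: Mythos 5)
The paper states this proposition without any proof, so there is no argument of the authors' to compare yours against; judged on its own terms, your strategy is the correct and standard one. Expanding the Pfaffian linearly along the bordering row/column that carries the $x$-dependence, replacing each entry $\psi_{k,l}(x)$ by the skew moment $\int_{\mathbb{R}^2}\psi_{k,l}(x)\sgn(y-x)\psi_{i,j}(y)\,dx\,dy$, and then invoking the vanishing of a Pfaffian with a repeated row/column pair is exactly how skew-orthogonality of Pfaffian-defined polynomials is always verified; and your observation that the deliberately skipped index ($n_1$ in $R_{\vec{n}+e_1}$, $n_2$ in $R_{\vec{n}+e_2}$) labels the unique surviving pairing correctly explains the design of the three definitions.

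The one genuine soft spot is the sign bookkeeping, which you defer with the remark that the signs are ``absorbed into the Pfaffian conventions chosen'' --- but those conventions are fixed by the statement of the proposition, so the signs must actually be computed, and they do not obviously all equal $+1$. For instance, after pairing $R_{n_1,n_2}$ with $\psi_{1,n_1+1}$, the new index sits at the end of the index list, after all $n_2$ second-component indices; realigning it to position $n_1+1$ so as to match the block matrix of \eqref{zn1n2} is a cycle of length $n_2+1$ and multiplies the Pfaffian by $(-1)^{n_2}$. Analogous permutation signs arise for $R_{\vec{n}+e_1}$ and $R_{\vec{n}+e_2}$ when the reinstated index is moved into its natural slot. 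So, as literally stated, the right-hand sides should be expected to hold only up to such signs, and to finish the proof you must either track them explicitly or weaken the conclusion to $\pm Z_{\vec{n}+e_i}$. (The source is itself cavalier on this point --- read literally, with $H(v^{(i)},u^{(j)})=-h_{j,i}$, the lower-left block of \eqref{zn1n2} is $+H^\top$ rather than $-H^\top$, so that matrix is not even skew-symmetric without reinterpretation --- but the gap is still yours to close if the proof is to be complete.)
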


				Instead, inspired by the generating function in the forms  \eqref{pf3} and \eqref{pf4}, we could introduce a new concept of multiple partial-skew-orthogonal polynomials. Namely, we could define another family of functions $\tilde{R}_{n+1,n_2}$ and $\tilde{R}_{n_1,n_1+1}$ to replace $R_{n_1+1,n_2}$ and $R_{n_1,n_2+1}$ when $n_1+n_2$ is odd. 				
				
				\begin{proposition}
					For $n_1+n_2$ being odd, we could define
					\begin{align*}
						&\tilde{R}_{n_1+1,n_2}(x)=\pf\left[\begin{array}{cccc}
							0&Q_{\mathbb{R}}(u^{(j)})&0&0\\
							-Q_{\mathbb{R}}(u^{(i)})&Q_{\mathbb{R}^2}(u^{(i)},u^{(j)})&H(u^{(i)},v^{(k)})&\psi_{1,i}(x)\\
							0&-H(v^{(l)},u^{(j)})&\tilde{Q}_{\mathbb{R}^2}(v^{(l)},v^{(k)})&\psi_{2,l}(x)\\
							0&-\psi_{1,j}(x)&-\psi_{2,k}(x)&0
						\end{array}
						\right]_{i,j=1,\cdots,n_1+1\atop k,l=1,\cdots,n_2},\\
						&\tilde{R}_{n_1+1,n_2}(x)=\pf\left[\begin{array}{cccc}
							0&0&\tilde{Q}_{\mathbb{R}}(v^{(k)})&0\\
							0&Q_{\mathbb{R}^2}(u^{(i)},u^{(j)})&H(u^{(i)},v^{(k)})&\psi_{1,i}(x)\\
							-\tilde{Q}_{\mathbb{R}}(v^{(l)})&-H(v^{(l)},u^{(j)})&\tilde{Q}_{\mathbb{R}^2}(v^{(k)},v^{(l)})&\psi_{2,l}(x)\\
							0&-\psi_{1,j}(x)&-\psi_{2,k}(x)&0
						\end{array}
						\right]_{i,j=1,\cdots,n_1\atop k,l=1,\cdots,n_2+1}.
					\end{align*}
					Moreover, these functions satisfy the following partial-skew orthogonality
					\begin{align*}
						&\int_{\mathbb{R}^2}R_{\vec{n}+e_1}(x)\sgn(y-x)y^{j-1}p(y,a_i;T)dxdy=Q_{\mathbb{R}}(u^{(j)})Z_{\vec{n}+e_1}\delta_{i,1},\quad 1\leq j\leq n_1+1,\\
						&\int_{\mathbb{R}^2}R_{\vec{n}+e_2}(x)\sgn(y-x)y^{j-1}p(y,a_i;T)dxdy=\tilde{Q}_{\mathbb{R}}(v^{(j)})Z_{\vec{n}+e_1}\delta_{i,2},\quad 1\leq j\leq n_2+1.\\
					\end{align*}
				\end{proposition}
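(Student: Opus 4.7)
The plan is to exploit the multilinearity of the Pfaffian in the entries of its last column (equivalently the last row, by skew-symmetry), which are the only entries of the defining matrix that depend on $x$. Setting $\tilde\psi_{i,j}(x):=\int_{\mathbb R}\sgn(y-x)y^{j-1}p(y,a_i;T)\,dy$, the desired integral equals $\int_{\mathbb R}\tilde R_{\vec n+e_1}(x)\tilde\psi_{i,j}(x)\,dx$, and Pfaffian linearity allows the integration to commute with $\pf$: the result is $\pf(A')$ where $A'$ is obtained by replacing each $\psi_{1,i'}(x)$ and $\psi_{2,l}(x)$ entry of the last column by its integral against $\tilde\psi_{i,j}(x)$. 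By the defining integral representations of $Q_{\mathbb R^2}$, $\tilde Q_{\mathbb R^2}$ and $H$ from Section~\ref{sec4}, these integrated entries are exactly moments of the form $Q_{\mathbb R^2}(u^{(i')},u^{(j)})$ and $H(v^{(l)},u^{(j)})$ when $i=1$, or $H(u^{(i')},v^{(j)})$ and $\tilde Q_{\mathbb R^2}(v^{(l)},v^{(j)})$ when $i=2$.

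For the case $i=2$ with $1\leq j\leq n_2$, the integrated last column of $A'$ coincides entrywise with the $(n_1+2+j)$-th column of $A'$ already present in the matrix (using the skew-symmetry of $\tilde Q_{\mathbb R^2}$, which forces the diagonal-type integral at the bottom-right position to vanish). Since $A'$ then has two identical columns, $\pf(A')=0$, which accounts for the $\delta_{i,1}$ on the right-hand side.

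For the case $i=1$, I expand $\pf(A')$ along the first row, whose only nonzero entries are $Q_{\mathbb R}(u^{(j')})$ at columns $1+j'$ for $j'=1,\dots,n_1+1$, obtaining
\begin{equation*}
\pf(A')=\sum_{j'=1}^{n_1+1}(-1)^{j'}Q_{\mathbb R}(u^{(j')})\,\pf\bigl(A'^{\,1,\,1+j'}\bigr),
\end{equation*}
where $A'^{\,1,\,1+j'}$ denotes the skew-symmetric submatrix with rows and columns $1$ and $1+j'$ removed. For every $j'\neq j$, the submatrix $A'^{\,1,\,1+j'}$ still contains the column indexed by $u^{(j)}$, and the same entrywise matching as in the previous paragraph shows that its integrated last column coincides with this column, so the corresponding subPfaffian vanishes. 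Only the term $j'=j$ survives; a cyclic permutation of columns (paired with the matching permutation of rows to preserve skew-symmetry) moves the new last column of $A'^{\,1,\,1+j}$ back to position $1+j$, recovering the base matrix whose Pfaffian is $Z_{\vec n+e_1}$ as in \eqref{pf1} of Theorem~\ref{thm4.3}, with a permutation sign that cancels the $(-1)^{j}$ from the expansion. This yields $\pf(A')=Q_{\mathbb R}(u^{(j)})Z_{\vec n+e_1}$.

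The main technical obstacle is the careful bookkeeping of signs in two places: the Pfaffian expansion along the first row, and the cyclic permutation identifying the surviving subPfaffian with $Z_{\vec n+e_1}$; one must check that these signs cancel exactly and that the duplicate-column argument is not spoiled by the border-induced sign conventions for $H$ versus its transposed version. The proof of the second orthogonality relation, for $\tilde R_{n_1,n_2+1}(x)$, proceeds by precisely the same argument with the roles of the two sources, and of the indices $(i,j)$ versus $(k,l)$, interchanged throughout.
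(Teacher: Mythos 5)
The paper states this proposition without any proof, so there is no argument of the authors' to measure yours against; judged on its own terms, your strategy is the standard mechanism for deriving (partial-)skew-orthogonality from a bordered Pfaffian, and it is essentially correct. Both pillars hold: the Pfaffian is linear in its unique $x$-dependent row/column, and a skew-symmetric matrix whose last row/column duplicates an interior one has vanishing Pfaffian (its determinant, the square of the Pfaffian, has two equal columns), and your identification of the integrated entries with $Q_{\mathbb{R}^2}(u^{(i')},u^{(j)})$, $H(v^{(l)},u^{(j)})$, $H(u^{(i')},v^{(j)})$, $\tilde{Q}_{\mathbb{R}^2}(v^{(l)},v^{(j)})$ is the right computation. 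Three points deserve care. First, in the $i=1$ case you can bypass the first-row expansion and its sign bookkeeping entirely: the integrated last column agrees with the $u^{(j)}$-column everywhere except in the border row, where the matrix has $Q_{\mathbb{R}}(u^{(j)})$ but the integrated column has $0$; writing the integrated column as the $u^{(j)}$-column minus $Q_{\mathbb{R}}(u^{(j)})$ times the first coordinate vector and applying linearity once more, the first piece dies by duplication and the second deletes both border rows and columns at once, leaving $Q_{\mathbb{R}}(u^{(j)})$ times the Pfaffian of the unbordered block matrix, which is $Z_{\vec{n}+e_1}$ by \eqref{pf1} and \eqref{zn1n2}, up to one overall sign fixed by the expansion convention. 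Second, the entrywise matching in the $v$-block is only valid if the entry printed as $-H(v^{(l)},u^{(j)})$ is read as $-H(u^{(j)},v^{(l)})$, i.e.\ as the transposed block of Theorem \ref{thm4.3}; this reading is forced in any case, since with the literal integral definition of $H(v^{(l)},u^{(j)})$ one has $H(v^{(l)},u^{(j)})=-H(u^{(j)},v^{(l)})$ and the displayed matrix would fail to be skew-symmetric, but you should say so explicitly rather than leave it as an unchecked ``obstacle.'' Third, the duplication argument that produces $\delta_{i,1}$ requires $1\leq j\leq n_2$ when $i=2$ (otherwise the integrated column is not a column of the matrix at all); the single range $1\leq j\leq n_1+1$ printed in the statement cannot apply to both weights, and your implicit restriction is the correct reading of what is being claimed.
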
				
				As mentioned in \cite{li24}, the advantage of partial-skew-orthogonality mainly lie in the existence of Pfaffian formula for odd order. We hope that these multiple partial-skew-orthogonal polynomials would be useful in the future study of classical integrable systems, including spectral transformations and corresponding Pfaffian $\tau$-functions.			
				
				\section{Generating functions for non-intersecting paths between several sets and intervals}\label{sec5}
				In this section, we consider non-intersecting paths between several sets and intervals. To this end, we consider different sets of vertices $\{\mathbf{u}_i\}_{i=1}^m$ and $\{\mathbf{v}_i\}_{i=1}^n$ and different intervals of vertices $\{I_i\}_{i=1}^n$ and $\{J_i\}_{i=1}^m$. Here we assume that each $\mathbf{u}_i$ has $r_i$ vertices labelled by $\mathbf{u}_i=(u_i^{(1)},\cdots,u_i^{(r_i)})$ and each $\mathbf{v}_j$ has $s_j$ vertices labelled by $\mathbf{v}_j=(v_j^{(1)},\cdots,v_j^{(s_j)})$. Moreover, we should assume that $I_i$ and $J_j$ are both totally ordered sets of vertices.

				\begin{definition}[$T\left(\mathbf{u}_1, \ldots, \mathbf{u}_m; I_1, \ldots I_n\right)$]
					For $m$ different vertex sets $\mathbf{u}_1,\cdots,\mathbf{u}_m$ and $n$ different vertices intervals $I_1,\cdots,I_n$, let's define $T\left(\mathbf{u}_1, \ldots, \mathbf{u}_m, I_1, \ldots I_n\right)$ as direct sums of sets and intervals by inserting $\{I_i\}_{i=1}^n$ into $\mathbf{u}_1\oplus\cdots\oplus \mathbf{u}_m$, and each vertex in $I_i$ precedes those in $I_j$ for $i<j$.
					
				\end{definition}    
				
				For example, if there are two sets of vertices $\mathbf{u}_1,\mathbf{u_2}$ and two intervals $I_1, I_2$, then 
				\begin{align*}
					T(\mathbf{u}_1,\mathbf{u_2};I_1,I_2)=\{\mau_1\oplus\mau_2\oplus I_1\oplus I_2,\mau_1\oplus I_1\oplus \mau_2\oplus I_2,\mau_1\oplus I_1\oplus I_2\oplus\mau_2,\\
					I_1\oplus\mau_1\oplus\mau_2\oplus I_2,I_1\oplus \mau_1\oplus I_2\oplus \mau_2,I_1\oplus I_2\oplus\mau_1\oplus \mau_2\}.
				\end{align*}			
				Moreover, we can define $T_\tau(\mau_1,\cdots,\mau_m;I_1,\cdots,I_n)$ is a specific element in $T(\mau_1,\cdots,\mau_m;I_1,\cdots,I_n)$ with ordering $\tau$. Given this $\tau$, we could define $\hat{T}_\tau(J_1,\cdots,J_m;\mav_1,\cdots,\mav_n)$ by replacing $\mau_i$ by $J_i$ for $i=1,\cdots,m$, and replacing $I_i$ by $\mav_i$ for $i=1,\cdots,n$. For example, if we define
				\begin{align*}
					T_\tau(\mau_1,\mau_2,I_1,I_2)=\mau_1\oplus I_1\oplus \mau_2\oplus I_2,
				\end{align*}
				then we have
				\begin{align*}
					\hat{T}_\tau(J_1,J_2;\mav_1,\mav_2)=J_1\oplus \mav_1\oplus J_2\oplus \mav_2.
				\end{align*}			
				With this setting, we could define non-intersecting paths between several sets and intervals.				
				\begin{definition}[$\mathscr{P}_0(T_\tau\left(\mathbf{u}_1, \cdots, \mathbf{u}_m; I_1, \cdots I_n\right) ; \hat{T}_\tau\left(J_1, \cdots, J_m; \mathbf{v}_1, \cdots \mathbf{v}_n\right))$]
					Given $\tau$ as a specific ordering, let's denote $\mathscr{P}_0(T_\tau\left(\mathbf{u}_1, \cdots, \mathbf{u}_m; I_1, \cdots, I_n\right) ; \hat{T}_\tau\left(J_1, \cdots, J_m; \mathbf{v}_1, \cdots, \mathbf{v}_n\right))$ as all disjoint paths where vertices in $\mathbf{u}_i$ go to $\mathbf{v}_j$ and $J_i$ for any $i=1,\cdots,m$ and $j=1,\cdots,n$, while vertices in $I_j$ go to the rest of $\mathbf{v}_j$. We further assume that the number of paths from $I_j$ to $\mathbf{v}_j$ and those from $\mathbf{u}_i$ to $J_j$ are both even.
				\end{definition}   
				\begin{definition}[Continuity of 1-Factors]
					A 1-factor 
					\begin{align}\label{continuous1factor}
						\pi\in\mathscr{F}\left(u_1^{(1)},\cdots,u_1^{(r_1)},\cdots,u_m^{(1)},\cdots,u_m^{(r_m)},v_{n}^{(s_n)},\cdots,v_n^{(1)},\cdots,v_1^{(s_1)},\cdots,v_1^{(1)}\right)
					\end{align}
					is said to be continuous if it satisfies the following two conditions:
					1. If there is some $u_p^{(k)}$ paired with $v_q^{(l)}$, then $u_p^{(k+1)}$ is paired with $v_q^{(l+1)}$, then $u_p^{(k-1)}$ must be paired with $v_q^{(l-1)}$, until all elements in $\mathbf{u}_p$ or $\mathbf{v}_q$ are fully paired, and the rest in $\mathbf{v_q}$ and $\mathbf{u_p}$ are internally paired separately (see Figure \ref{fig2}.a) ;  2. Under the order $\tau$, if $\mathbf{u}_i$ and $\mathbf{u}_{i+1}$ are adjacent, and $u_i^{(r_i)}$ is paired with $v_q^{(l)}$, then $u_{i+1}^{(1)}$ is paired with $v_q^{(l+1)}$ (see Figure \ref{fig2}.b). This is also true for $\mathbf{v}_i$ and $\mathbf{v}_{i+1}$.	
					
				\end{definition}	
				\begin{figure}[htbp]
					\centering    
					\subfigure[condition 1 of the continuity of 1-factors]{				
						\includegraphics[width=0.60\textwidth]{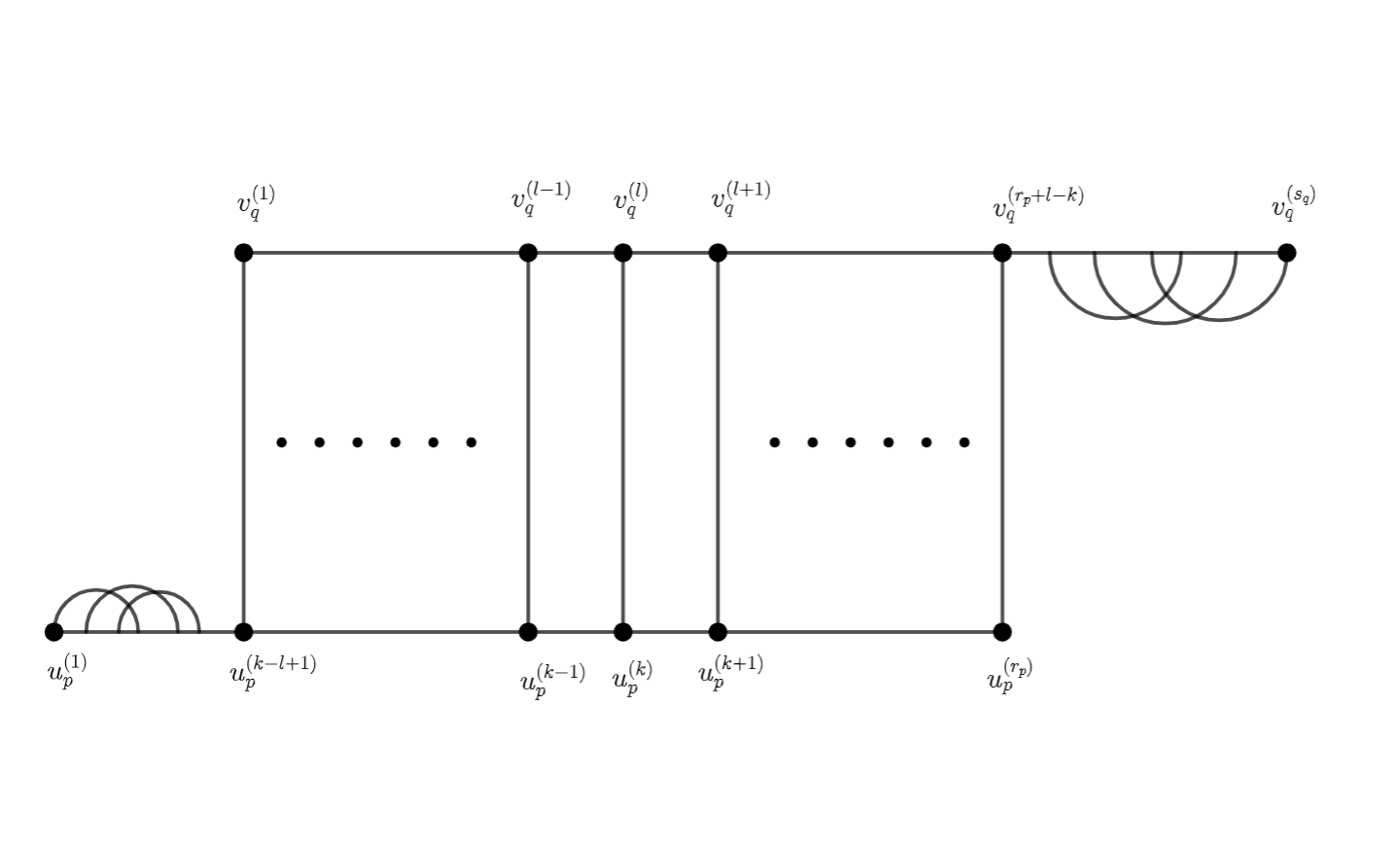}}\\
					\subfigure[condition 2 of the continuity of 1-factors]{				
						\includegraphics[width=0.80\textwidth]{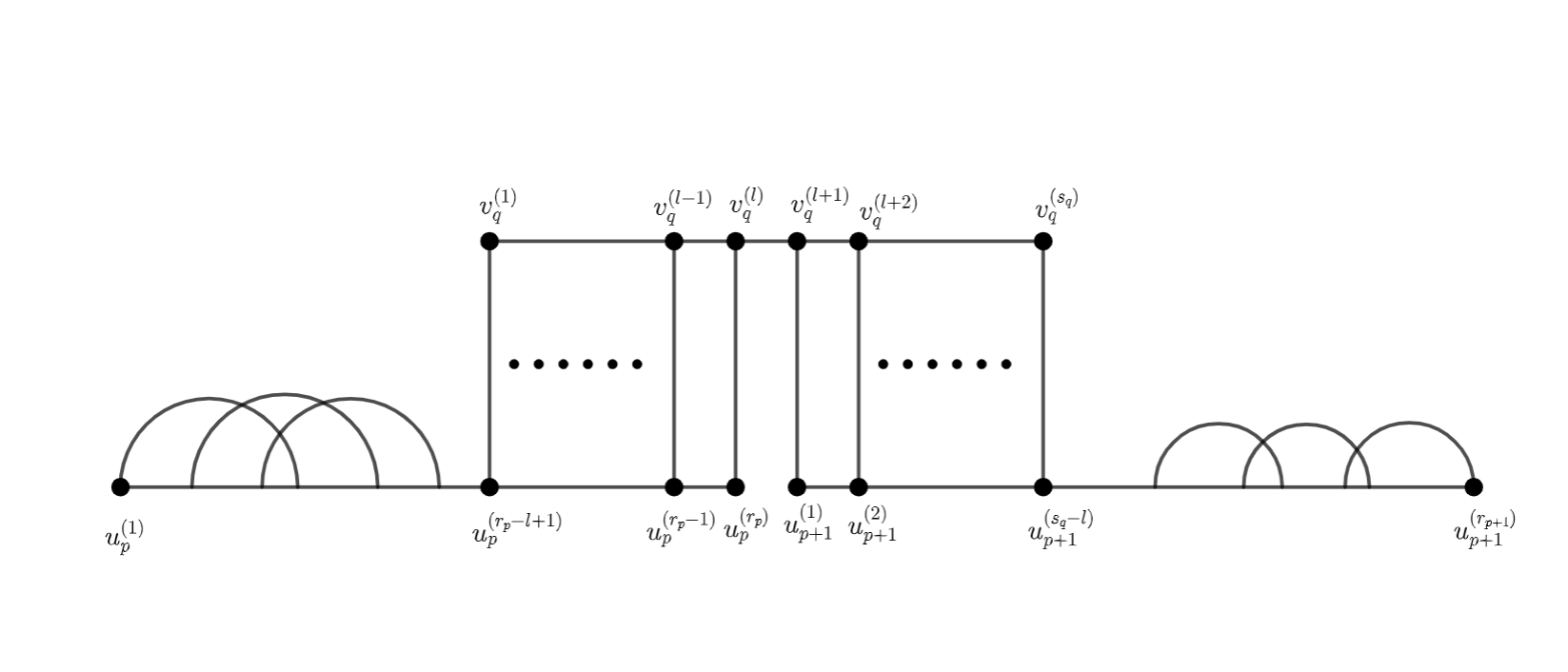}}
					
					\caption{Graphic explanations for the continuity of 1-factors}		
					\label{fig2}
				\end{figure}
				It should be noted that a continuous 1-factor could be divided into three parts. Namely, for any 
				$\pi\in\mathscr{F}\left(u_1^{(1)},\cdots,u_1^{(r_1)},\cdots,u_m^{(1)},\cdots,u_m^{(r_m)},v_{n}^{(s_n)},\cdots,v_n^{(1)},\cdots,v_1^{(s_1)},\cdots,v_1^{(1)}\right)$,
				we have 
				\begin{align*}
					\pi=\sigma \cup \pi_{\sigma_1} \cup \pi_{\sigma_2},
				\end{align*}
				where $\sigma=\bigcup\left(u_p^{(i)}, v_q^{(j)}\right)$, $\pi_{\sigma_1}=\bigcup_{t=1}^m \pi_{\sigma_1}^{(t)}$, $\pi_{\sigma_1}^{(t)} \in \mathscr{F}\left(\mathbf{u}_t \backslash\left(\sigma \cap \mathbf{u}_t\right)\right)$, and
				$\pi_{\sigma_2}=\bigcup_{t=1}^n \pi_{\sigma_2}^{(t)}$ ,$\pi_{\sigma_2}^{(t)} \in \mathscr{F}\left(\mathbf{v}_t \backslash\left(\sigma \cap \mathbf{v}_t\right)\right)$. 
				Here $\tau$ means a continuous pairing between $\mathbf{u}$ and $\mathbf{v}$, and $\pi_{\sigma_1}$ (respectively $\pi_{\sigma_2}$) means a pairing between vertices in $\mathbf{u}$ (respectively in $\mathbf{v}$). We denote all continuous 1-factors as a set $\Lambda_\sigma$.	
				
				Here we remark that $\mau_i$ and $\mau_{i+1}$ could not be merged into one vertex set, since there could be paths from $\mau_i\to I_i$ and from $\mau_{i+1}\to I_{i+1}$. These paths are different.

				\begin{definition}[$D$-separated]
					Two ordered sets of vertices $T_\tau\left(\mathbf{u}_1, \mathbf{u}_2, \ldots \mathbf{u}_m, J_1, J_2, \ldots, J_n\right)$ and $\hat{T}_\tau\left(I_1, I_2, \ldots I_m, \mathbf{v}_1, \mathbf{v}_2, \ldots, \mathbf{v}_n\right)$ are $D$-separated if any path from $\mathbf{u}_i$ to $I_i$ does not intersect with any path starting from $\mathbf{u}_k$ $(k \neq i)$, and any path from $J_j$ to $\mathbf{v}_j$ does not intersect with any path to $\mathbf{v}_l$ $(l \neq j)$. Moreover, and any path from $\mathbf{u}_i$ to $I_i$ does not intersect with any path from $J_j$ to $\mathbf{v}_j$ for all $i=1,\cdots,m$ and $j=1,\cdots,n$. 				\end{definition}

				\begin{theorem}					
					Let $\mathbf{u}_i=\left(u^{(1)}_i, u^{(2)}_i, \ldots, u^{(r_i)}_i\right)$ and $\mathbf{v}_j=\left(v^{(1)}_j, \ldots, v^{(s_j)}_j\right)$ $(1 \le i \le m,1 \le j \le n)$ be sequences of vertices in the acyclic directed graph $D$, and assume that $\sum_{i=1}^m r_i+\sum_{j=1}^n s_j$ is even.
					If for any $1 \leq i \leq m, 1 \leq j \leq n$, $I_j$, $J_i$ are totally ordered subsets of $V$, such that $T_\tau\left(\mathbf{u}_1, \mathbf{u}_2, \ldots, \mathbf{u}_m, I_1, I_2, \ldots I_n\right)$ and $\hat{T}_\tau\left(J_1, J_2, \ldots, J_m, \mathbf{v}_1, \mathbf{v}_2, \ldots \mathbf{v}_n\right)$ are $D$-compatible and $D$-separated.
					Then
					\begin{align} \label{pf2}
						{\rm{GF }}\left[\mathscr{P}_0\left( T_\tau\left(\mathbf{u}_1, \mathbf{u}_2, \ldots, \mathbf{u}_m, I_1, I_2, \ldots I_n\right); \hat{T}_\tau\left(J_1, J_2, \ldots, J_m, \mathbf{v}_1, \mathbf{v}_2, \ldots \mathbf{v}_n\right) \right)\right] = \pf\left[ \begin{array}{cc}
							{Q} & {H} \\
							-{H}^T & {\tilde{{Q}}}
						\end{array} \right],
					\end{align}
					where
					\begin{align*}
						&{{ Q}}=\text{diag}\left(
						Q_{J_1}(u_1^{(k)},u_1^{(l)})_{1\leq k,l\leq r_1},\cdots,Q_{J_m}(u_m^{(k)},u_m^{(l)})_{1\leq k,l\leq r_m}
						\right),
						\\
						&{{\tilde Q }}=\text{diag}\left(\tilde{Q}_{I_n}(v_n^{(k)},v_n^{(l)})_{1\leq k,l\leq s_n},\cdots,\tilde{Q}_{I_1}(v_1^{(k)},v_1^{(l)})_{1\leq k,l\leq s_1} \right),
						\\
						&{{H}}=\left(
						H_{\mau_i,\mav_j}
						\right)_{i=1,\cdots,m,j=n,\cdots,1},\quad H_{\mau_i,\mav_j}=\left(h(u_i^{(k)},v_j^{(s_j+1-l)})\right)_{k=1,\cdots,r_i,l=1,\cdots,s_j}.
					\end{align*}

				\end{theorem}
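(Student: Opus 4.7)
The plan is to adapt the three-step path-switching involution argument from the proof of Theorem \ref{thm4.3} to this more elaborate block setting. First, I would expand the Pfaffian on the right-hand side of \eqref{pf2} using the definition \eqref{expansionformula} as a signed sum over 1-factors $\pi$ of the ordered vertex list $\left(u_1^{(1)},\ldots,u_m^{(r_m)},v_n^{(s_n)},\ldots,v_1^{(1)}\right)$. Because $Q$ and $\tilde{Q}$ are block-diagonal, the only $\pi$ contributing a nonzero term are those whose $u$-$u$ pairings sit inside a single $\mathbf{u}_i$ and whose $v$-$v$ pairings sit inside a single $\mathbf{v}_j$. Each entry of the matrix is itself a weighted sum over paths by the definitions \eqref{qi} and \eqref{qij}, so this expansion rewrites the Pfaffian as a sum over configurations $(\pi, P^{(1)},\ldots,Q^{(1)},\ldots)$ obeying the three ``configuration conditions'' of Theorem \ref{thm4.3}, now adapted block-by-block: a $(u_i^{(k)},u_i^{(l)})$ pairing contributes two non-intersecting paths to $J_i$, a $(v_j^{(k)},v_j^{(l)})$ pairing contributes two non-intersecting paths from $I_j$, and a cross pairing $(u_i^{(k)},v_j^{(l)})$ contributes a single path in $\mathscr{P}(u_i^{(k)};v_j^{(l)})$.

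Second, I would run the path-switching involution to kill all configurations containing intersecting paths. The $D$-separated hypothesis is the crucial input: it guarantees that any intersection between two paths in the configuration occurs either between two paths emanating from the same $\mathbf{u}_i$ (both going to $J_i$ or to a $\mathbf{v}_j$), or between two paths arriving at the same $\mathbf{v}_j$ (both from $I_j$ or from some $\mathbf{u}_i$). Having localized the possible intersections, the involution---switching tails at the smallest (or largest) intersection point between the two paths of smallest indices---goes through verbatim as in Theorem \ref{thm4.3}. The verification that the swapped configuration still satisfies the configuration conditions, and that the $D$-compatibility of $T_\tau$ with $\hat{T}_\tau$ forces the involution to be sign-reversing on $\pi$, is exactly analogous to the corresponding step there.

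Third, after cancellation the remaining configurations consist only of non-intersecting paths, and I need to match them with $\mathscr{P}_0(T_\tau(\mathbf{u}_1,\ldots;I_1,\ldots);\hat{T}_\tau(J_1,\ldots;\mathbf{v}_1,\ldots))$. Here the continuity condition for 1-factors becomes decisive: because $T_\tau$ and $\hat{T}_\tau$ are $D$-compatible, whenever a cross pairing $(u_i^{(k)}, v_j^{(l)})$ survives the involution, the neighboring pairings must also be cross pairings $(u_i^{(k\pm 1)}, v_j^{(l\pm 1)})$ until one of $\mathbf{u}_i$ or $\mathbf{v}_j$ is exhausted; and if $\mathbf{u}_i,\mathbf{u}_{i+1}$ are $\tau$-adjacent with $u_i^{(r_i)} \leftrightarrow v_q^{(l)}$, then necessarily $u_{i+1}^{(1)} \leftrightarrow v_q^{(l+1)}$. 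This identifies the surviving 1-factors with the set $\Lambda_\sigma$ of continuous 1-factors, and each continuous 1-factor decomposes as $\pi = \sigma \cup \pi_{\sigma_1} \cup \pi_{\sigma_2}$. Using the identity $\mathrm{sgn}(\sigma)\bigl(\sum_{\pi_{\sigma_1}}\mathrm{sgn}(\pi_{\sigma_1})\bigr)\bigl(\sum_{\pi_{\sigma_2}}\mathrm{sgn}(\pi_{\sigma_2})\bigr)=1$, exactly as in the final step of Theorem \ref{thm4.3}, the resulting sum collapses to $\mathrm{GF}\left[\mathscr{P}_0(T_\tau;\hat{T}_\tau)\right]$.

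The main obstacle will be the third step: the bookkeeping for the continuity condition under an arbitrary interleaving order $\tau$, and in particular the verification that at every $\tau$-boundary between adjacent $\mathbf{u}_i,\mathbf{u}_{i+1}$ (or adjacent $\mathbf{v}_j,\mathbf{v}_{j+1}$) the forced ``jump'' pairing $u_i^{(r_i)}\leftrightarrow v_q^{(l)},\,u_{i+1}^{(1)}\leftrightarrow v_q^{(l+1)}$ is compatible with both $D$-separated and $D$-compatible. A secondary subtlety is tracking the parity hypothesis that paths from $I_j\to\mathbf{v}_j$ and from $\mathbf{u}_i\to J_i$ are both even, which is what makes the block-diagonal Pfaffian structure close up and what enables the sign-telescoping identity above; I would confirm it by reducing, block by block, to the even cases \eqref{pf1} and \eqref{evenorder} already established.
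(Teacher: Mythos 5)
Your proposal follows essentially the same three-step path-switching argument as the paper's own proof: expand the block Pfaffian over 1-factors (with the block-diagonal structure of $Q$ and $\tilde Q$ localizing internal pairings), cancel intersecting configurations via the sign-reversing tail-swap involution enabled by $D$-separation, and identify the survivors with continuous 1-factors $\pi=\sigma\cup\pi_{\sigma_1}\cup\pi_{\sigma_2}$ whose signs telescope to $1$. The points you flag as delicate (continuity at $\tau$-boundaries, parity bookkeeping) are exactly the ones the paper also treats only by appeal to the analogous steps of Theorem \ref{thm4.3}, so there is no substantive divergence.
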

				
				\begin{proof}
					First, we know that the right hand side of equation \eqref{pf2} could be expanded as
					
					\begin{align*}
						\sum\limits_\pi  {{\sgn}} (\pi )
						\left( \prod\limits_{(u_k^{(i)},u_k^{(j)}) \in \pi } Q_{I_k}(u_k^{(i)},u_k^{(j)}) \right)\left( \prod\limits_{(u_k^{(i)},v_l^{(j)}) \in \pi } h(u_k^{(i)},v_l^{(j)}) \right)\left(\prod\limits_{(v_l^{(j)},v_l^{(i)}) \in \pi } {\tilde Q}_{J_l} (v_l^{(j)},v_l^{(i)}) \right)
					\end{align*}
					according to the formula \eqref{expansionformula}, where the summation is over all continuous 1-factors \eqref{continuous1factor}. Moreover, we could recognize this formula as a product of path weights and rewrite it as 
					\begin{align}
						\pf\left[\begin{array}{cc}
							Q & H \\
							-H^T & \tilde{Q}
						\end{array}\right]=\sum_{\left(\pi, \mathbf{P}_1, \ldots, \mathbf{P}_m, \mathbf{Q}_1, \ldots, \mathbf{Q}_n\right)} \operatorname{sgn}(\pi) \omega\left(\pi, \mathbf{P}_1, \ldots, \mathbf{P}_m, \mathbf{Q}_1, \ldots, \mathbf{Q}_n\right),
					\end{align}
					where $\mathbf{P}_k$ and $\mathbf{Q}_k$ are a set of paths, 
					$\mathbf{P}_k=\left(P_k^{(1)}, P_k^{(2)}, \ldots, P_k^{\left(r_k\right)}\right)$, $\mathbf{Q}_l=\left(Q_l^{(1)}, Q_l^{(2)}, \ldots, Q_l^{\left(s_l\right)}\right)$,
					and $\left(\pi, \mathbf{P}_1, \ldots, \mathbf{P}_m, \mathbf{Q}_1, \ldots, \mathbf{Q}_n\right)$ is an $(r+s+1)$-tuple configuration such that
					
					\begin{itemize}
						\item If $\left(u_k^{(i)}, v_l^{(j)}\right) \in \pi$,then  $P_k^{(i)}=Q_l^{\left(j\right)} \in \mathscr{P}\left(u_k^{(i)}, v_l^{(j)}\right)$; 
						\item If $\left(u_k^{(i)}, u_k^{(j)}\right) \in \pi$,then $P_k^{(i)} \in \mathscr{P}\left(u_k^{(i)} ; I_k\right)$,$P_k^{(j)} \in \mathscr{P}\left(u_k^{(j)} ; I_k\right)$,while $P_k^{(i)}$ and $P_k^{(j)}$ don't intersect; 
						\item If $\left(v_l^{(j)}, v_l^{(i)}\right) \in \pi$,then $Q_l^{(j)} \in \mathscr{P}\left(J_l ; v_l^{(j)}\right)$,$Q_l^{(i)} \in \mathscr{P}\left(J_l ; v_l^{(i)}\right)$,while $Q_l^{(j)}$ and $Q_l^{(i)}$ don't intersect.
					\end{itemize}
					Then, we seek for a sign-reversing involution to offset the intersecting paths.
					Since the graph is $D$-separated, we know that if paths intersect, then the intersecting point belongs to $P_k^{(i)}\cap P_l^{(j)}$ or $Q_k^{(i)}\cap Q_l^{(j)}$ for some $k,l$ and $i,j$. Similar to the proof in Theorem \ref{thm4.3}, we find the smallest intersection point $w^*$\footnote{In this case, we find the smallest intersection point starting from paths in $\mathbf{P}_1$. If there is an intersection point in $\mathbf{P}_1$, then we find it from paths starting from $u_1^{(1)}$ and the rest is the same with the case in Theorem \ref{thm4.3}. If there isn't any intersection point in $\mathbf{P}_1$, then we find it in $\mathbf{P}_2$, and so on.}, and assume that $P_k^{(i)}$ and $P_l^{(j)}$ are the two whose subcripts (priority) $k,l$ and supercripts (followed) $i,j$ are the smallest. Thus, we could construct an involution
					\begin{align*}
						\phi: \left(\pi, \mathbf{P}_1, \ldots, \mathbf{P}_m, \mathbf{Q}_1, \ldots, \mathbf{Q}_n\right)\to \left(\pi, \bar{\mathbf{P}}_1, \ldots, \bar{\mathbf{P}}_m, \bar{\mathbf{Q}}_1, \ldots, \bar{\mathbf{Q}}_n\right)
					\end{align*}
					where 
					\begin{align*}
						\bar{P}_k^{(i)}=P_k^{(i)}(\to w^*)P_l^{(j)}(w^*\to),\quad \bar{P}_l^{(j)}=P_l^{(j)}(\to w^*)P_k^{(i)}(w^*\to),
					\end{align*}
					and the rest paths remain invariant. Using the method shown in Theorem \ref{thm4.3}, we can verify that this map is a sign-reversing involution by the property of $D$-separated graph.
					
					By using the sign-reversing involutions to cancel the intersection terms, we find that only configurations containing continuous 1-factors are left. As we divide the continuous 1-factor into 3 parts,   \eqref{pf2} could be rewritten as the non-intersecting part of
					
					\begin{align}\label{simplifiedd}
						\begin{aligned}
						\sum_{\sigma}\sum_{\pi\in\Lambda_\sigma }  \operatorname{sgn}(\pi)
							&\left(\prod_{k=1}^m \prod_{l=1}^n \prod_{\left(u_k^{(i)}, v_l^{(j)}\right) \in \sigma} h\left(u_k^{(i)}, v_l^{(j)}\right)\right)\\
							&\times\left(\prod_{k=1}^m \prod_{\left(u_k^{(i)}, u_k^{(j)}\right) \in \pi_{\sigma_1}} Q_{I_k}\left(u_k^{(i)}, u_k^{(j)}\right)\right)\left(\prod_{l=1}^n \prod_{\left(v_l^{(j)}, v_l^{(i)}\right) \in \pi_{\sigma_2}} \tilde{Q}_{J_l}\left(v_l^{(j)}, v_l^{(i)}\right)\right)
						\end{aligned}
					\end{align}
					\begin{figure}[htbp]
						\centering
						\includegraphics[width=10cm]{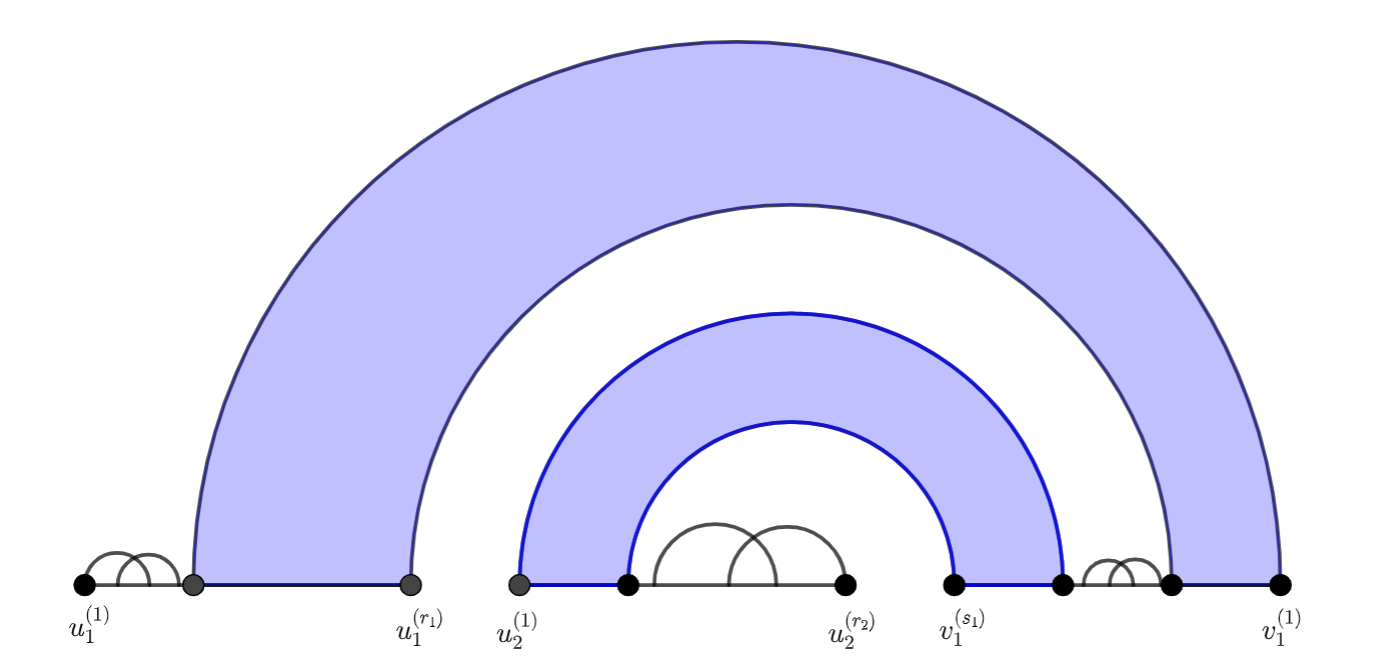}
						\caption{An explanation of non-intersecting pairing between $\mathbf{u}_1\oplus I_1\oplus\mathbf{u}_2\to J_1\oplus\mathbf{v}_1\oplus J_2$.} 
					\end{figure}
					
					Moreover, by realizing
					\begin{align*}\sum\limits_{\pi  \in {\Lambda _\sigma }} {{\mathop{\rm sgn}} } (\pi ) = \left( {\sum {{\mathop{\rm sgn}} } \left( {{\pi_{\sigma_1^{(1)}}}} \right)} \right)\cdots\left(
						\sum\sgn\left(
						\pi_{\sigma_1^{(m)}}
						\right)
						\right)
						\left( {\sum {{\mathop{\rm sgn}} } \left( {{\pi_{\sigma_2^{(1)}}}} \right)} \right) \cdots \left( {\sum {{\mathop{\rm sgn}} } \left( {{\pi_{\sigma_2^{(n)}}}} \right)} \right) = 1,
					\end{align*}
					we know that \eqref{simplifiedd} represents $\gf\left[\mathscr{P}_0\left(T_\tau\left(\mathbf{u}_1, \ldots \mathbf{u}_m, J_1,  \ldots, J_n\right); \hat{T}_\tau\left(I_1, \ldots I_m, \mathbf{v}_1,  \ldots, \mathbf{v}_n\right)\right)\right]$, and there are even paths from $\mathbf{u}_k\to I_k$ and even paths from $J_l\to\mathbf{v}_l$.

				\end{proof}

				\section*{Acknowledgement}
				We would like to thank the referee for useful comments. This work is partially funded by grants (NSFC12101432, NSFC12175155).

			\end{document}